\newtheorem{theorem}{Theorem}[section]
\newtheorem{lemma}[theorem]{Lemma}
\newtheorem{proposition}[theorem]{Proposition}
\newtheorem{ex}{Exercise}[section]
\newtheorem{examp}[theorem]{Example}
\newtheorem{corollary}[theorem]{Corollary}
\newtheorem{remar}[theorem]{Remark}
\newenvironment{remark}{\begin{remar}\rm}{\end{remar}}
\newcommand{\diams}{\unskip\nobreak\hfil\penalty50%
\hskip1em\hbox{}\nobreak\hfil%
$\diamondsuit$\parfillskip=0pt\finalhyphendemerits=0}
\newcommand{\bfind}[1]{\index{#1}{\bf #1}}
\newcommand{\n}{\par\noindent}
\newcommand{\sn}{\par\smallskip\noindent}
\newcommand{\mn}{\par\medskip\noindent}
\newcommand{\bn}{\par\bigskip\noindent}
\newcommand{\pars}{\par\smallskip}
\newcommand{\parm}{\par\medskip}
\newcommand{\parb}{\par\bigskip}
\newcommand{\isom}{\simeq}
\newcommand{\ovl}[1]{\overline{#1}}
\newcommand{\Th}{\mbox{\rm Th}\,}
\newcommand{\chara}{\mbox{\rm char}\,}
\newcommand{\subsetuneq}{\mathrel{\raisebox{.8ex}{\footnotesize%
$\displaystyle\mathop{\subset}_{\not=}$}}}
\newcommand{\cal}{\mathcal}
\font\tenlv=msbm10  %scaled 1200
\font\sevenlv=msbm7 %scaled 1200
\font\fivelv=msbm5  %scaled 1200
\def\lv #1{{\mathchoice{{\hbox{\tenlv #1}}}{{\hbox{\tenlv #1}}}
{{\hbox{\sevenlv #1}}}{{\hbox{\fivelv #1}}}}}
\newcommand{\N}{\lv N}
\newcommand{\Q}{\lv Q}
\newcommand{\R}{\lv R}
\newcommand{\Z}{\lv Z}
\newcommand{\F}{\lv F}
\newcommand{\Fp}{\F_p}
\begin{document}
\title{Notes on extremal and tame valued fields\\
%\small\it --- Preliminary version ---
}
\author{Sylvy Anscombe and Franz-Viktor Kuhlmann}
\address{Jeremiah Horrocks Institute,
Leighton Building Le7,
University of Central Lancashire,
Preston,
PR1 2HE,
United Kingdom}
\email{sanscombe@uclan.ac.uk}
\address{Institute of Mathematics,
University of Silesia,
ul.~Bankowa 14,
40-007 Katowice,
Poland}
\email{fvk@math.us.edu.pl}
%\thanks{This work was partially supported by a Canadian NSERC grant }
\date{February 9, 2016}
%\subjclass[2000]{Primary 12J20; Secondary 12J10}
\begin{abstract}\noindent
%{\footnotesize\rm  }
We extend the characterization of extremal valued fields given in
\cite{[AKP]} to the missing case of valued fields of mixed
characteristic with perfect residue field. This leads to a complete
characterization of the tame valued fields that are extremal. The key to
the proof is a model theoretic result about tame valued fields in mixed
characteristic. Further, we prove that in an extremal valued field of
finite $p$-degree, the images of all additive polynomials have the
optimal approximation property. This fact can be used to improve the
axiom system that is suggested in \cite{[K1]} for the elementary theory
of Laurent series fields over finite fields. Finally we give examples
that demonstrate the problems we are facing when we try to characterize
the extremal valued fields with imperfect residue fields. To this end,
we describe several ways of constructing extremal valued fields; in
particular, we show that in every $\aleph_1$ saturated valued field
the valuation is a composition of extremal valuations of rank 1.

\end{abstract}
\maketitle

%
%???????????????????????????????????????????????????????????????????????
%
\section{Introduction}
A valued field $(K,v)$ with valuation ring ${\cal O}$ and value group
$vK$ is called \bfind{extremal} if for every multi-variable
polynomial $f(X_1, \dots , X_n)$ over $K$ the set
\[
\{v(f(a_1,\dots,a_n))\mid a_1,\dots, a_n \in {\cal O}\}
\subseteq vK\cup\{\infty\}
\]
has a maximal element. For the history of this notion, see \cite{[AKP]}.
In that paper, extremal fields were characterised in several special
cases, but some cases remained open. In the present paper we answer the
question stated after Theorem~1.2 of \cite{[AKP]} to the positive,
thereby removing the condition of equal characteristic from the theorem.
The most comprehensive version of the theorem now reads:

\begin{theorem}                                       \label{gen}
Let $(K,v)$ be a nontrivially valued field. If $(K,v)$ is extremal, then
it is algebraically complete and
\begin{itemize}
  \item[(i)] $vK$ is a $\mathbb{Z}$-group, or
  \item[(ii)] $vK$ is divisible and $Kv$ is large.
 \end{itemize}
Conversely, if $(K,v)$ is algebraically complete and
\begin{itemize}
  \item[(i)] $vK\isom\Z$, or $vK$ is a $\mathbb{Z}$-group and $\chara Kv=0$, or
  \item[(ii)] $vK$ is divisible and $Kv$ is large and perfect,
 \end{itemize}
then $(K,v)$ is extremal.
\end{theorem}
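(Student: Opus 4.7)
The forward implication---that an extremal valued field is algebraically complete and satisfies (i) or (ii)---is stated without characteristic restriction and is already established in \cite{[AKP]}, so I would simply cite it. For the converse, since \cite{[AKP]} handles the case $\chara K = \chara Kv$, I may assume $(K,v)$ has mixed characteristic $(0,p)$. The hypotheses (algebraically complete, divisible value group, perfect residue field) make $(K,v)$ a tame valued field, and the plan is to produce an equal-characteristic tame companion, invoke an Ax--Kochen--Ershov-type principle to get elementary equivalence, and then transport extremality.

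The companion I would take is the Hahn series field $(L,w):=Kv((t^{vK}))$ with its canonical $t$-adic valuation. Since $\chara Kv=p$, also $\chara L=p$, so $L$ has equal characteristic; moreover $L$ is maximal, hence algebraically complete, with divisible value group $vK$ and the same large perfect residue field $Kv$. In particular $(L,w)$ is itself tame and satisfies the converse hypotheses in equal characteristic, so by \cite{[AKP]} it is extremal.

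Next I would invoke the model-theoretic result advertised in the abstract: two tame valued fields with elementarily equivalent value groups and residue fields are elementarily equivalent in the language of valued fields. Since $(K,v)$ and $(L,w)$ share the very same $vK$ and $Kv$, this yields $(K,v)\equiv(L,w)$. To transport extremality across this equivalence, I would note that extremality is an elementary scheme: for each pair $(d,n)\in\mathbb{N}^2$, the assertion ``every polynomial of degree $\le d$ in $n$ variables attains the maximum of its valuation on $\mathcal O^n$'' is a single first-order sentence---universally quantify the coefficient vector, existentially assert a maximizer $\bar x\in\mathcal O^n$, and universally compare with any other $\bar y\in\mathcal O^n$. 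Taking the conjunction over all $(d,n)$ shows that extremality is preserved under elementary equivalence, so $(K,v)$ inherits it from $(L,w)$.

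The main obstacle is the cross-characteristic AKE step for tame fields---that passing between mixed and equal characteristic $p$ preserves the elementary theory when the residue fields and value groups agree. This is precisely the model-theoretic theorem about tame fields in mixed characteristic that the paper announces as its key tool; once it is in hand, the reduction to the equal-characteristic converse of \cite{[AKP]} is essentially formal. A minor but worthwhile check is that the Hahn series companion really is tame, which follows from its being maximal with divisible value group and perfect residue field in characteristic $p$.
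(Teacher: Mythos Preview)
Your approach has a fatal error at the cross-characteristic step. You propose to compare the mixed-characteristic field $(K,v)$ (with $\chara K=0$) to the equal-characteristic Hahn field $(L,w)=Kv((t^{vK}))$ (with $\chara L=p$) and conclude $(K,v)\equiv(L,w)$. But characteristic is a first-order property of a field: the sentence $\underbrace{1+\cdots+1}_{p}=0$ holds in $L$ and fails in $K$, so $(K,v)\not\equiv(L,w)$ regardless of any AKE-type principle. No transfer theorem can bridge different field characteristics. You have also misread the abstract: the ``model theoretic result about tame valued fields in mixed characteristic'' is \emph{not} the AKE equivalence principle $vK\equiv vL\wedge Kv\equiv Lv\Rightarrow(K,v)\equiv(L,v)$; in fact the paper proves (Theorem~\ref{tmcne}) that this principle \emph{fails} for tame fields in mixed characteristic, even when the value groups and residue fields literally coincide.

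What the paper actually does is stay inside mixed characteristic throughout. The key tool is Theorem~\ref{triangle} (and its Corollary~\ref{tfeqtf}), proved via the AKE principle for elementary \emph{extensions} of tame fields (Theorem~\ref{AKE}): given the tame field $(K,v)$ with divisible value group, one constructs a tame field $(L,v)$ with $vL=\R$, $Lv=Kv$, and $(L,v)\equiv(K,v)$ --- same characteristic $0$. The point is that the direct argument in \cite{[AKP]} showing extremality already works in mixed characteristic once the value group is $\R$; the equal-characteristic hypothesis in \cite{[AKP]} was only needed to pass from an arbitrary divisible value group to $\R$ via the then-available AKE principle. Corollary~\ref{tfeqtf} supplies that passage in mixed characteristic, and extremality then transfers back to $(K,v)$ by elementary equivalence, exactly as you describe in your final paragraph.
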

Note that a valued field $(K,v)$ is called \bfind{algebraically
complete} if every finite algebraic extension $(L,v)$ satisfies
\begin{equation}                            \label{dle}
[L:K]\>=\>(vL:vK)[Lv:Kv]\>,
\end{equation}
where $Lv$, $Kv$ denote the respective residue fields. Every
algebraically complete valued field $(K,v)$ is \bfind{henselian}, i.e.,
$v$ admits a unique extension to its algebraic closure $\tilde{K}$
(which we will again denote by $v$). Also, every
algebraically complete valued field $(K,v)$ is \bfind{algebraically
maximal}, that is, does not admit proper algebraic immediate extensions
$(L,v)$ (\bfind{immediate} means that $vL=vK$ and $Lv=Kv$). For later use let us mention
that a valued field is called \bfind{maximal} if it does not admit proper immediate extensions at all.

Further, $(K,v)$ is a \bfind{tame field} if it is henselian, perfect, and
$\tilde{K}$ is equal to the ramification field of the extension
$(\tilde{K}|K,v)$. All tame fields are algebraically complete (cf.\
\cite[Lemma~3.1]{[K4]}).

A field $K$ is \bfind{large} if every smooth curve over $K$ which has a
$K$-rational point, has infinitely many such points. For more
information about large fields, see \cite{[P]}, \cite{[K2a]} and
\cite{[AKP]}.

\parm
In \cite{[AKP]} it was proved that an algebraically complete valued field $(K,v)$ with
divisible value group and large perfect residue field is extremal if $\chara K =
\chara Kv$ (the \bfind{equal characteristic} case). To this end, we used
the Ax--Kochen--Ershov Principle
\begin{equation}                            \label{AKEequiv}
vK\equiv vL\>\wedge\> Kv\equiv Lv \;\;\;\Longrightarrow\;\;\;
(K,v)\equiv \mbox{$(L,v)$}
\end{equation}
which holds for all tame valued fields of equal characteristic (see
\cite[Theorem~1.4]{[K4]}). We were not able to cover the \bfind{mixed characteristic} case
$\chara K \ne\chara Kv$ because the principle was not known for this case. In fact,
we will show below (Theorem~\ref{tmcne}) that it is false. However, we
can do with lesser tools that {\it are} known. After all, at least the
corresponding Ax--Kochen--Ershov Principle for elementary extensions has
been proved in \cite{[K4]}:

\begin{theorem}                             \label{AKE}
If $(L|K,v)$ is an extension of tame fields such that $vK\prec vL$ and
$Kv\prec Lv$, then $(K,v)\prec (L,v)$.
\end{theorem}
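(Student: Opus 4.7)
I would prove Theorem~\ref{AKE} via a back--and--forth construction between sufficiently saturated elementary extensions, with the embedding theorem for tame valued fields as the engine driving each one--point extension step.

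First, set up the saturated models. Pass to $\kappa$-saturated elementary extensions $(K^*,v)\succ(K,v)$ and $(L^*,v)\succ(L,v)$ of common cardinality $\kappa$, for $\kappa$ sufficiently large (such extensions exist by standard model--theoretic constructions). Tameness is preserved under elementary extensions, so $(K^*,v)$ and $(L^*,v)$ are tame; the value groups $vK^*,vL^*$ and residue fields $K^*v,L^*v$ inherit $\kappa$-saturation as sorts of a saturated structure. The hypotheses $vK\prec vL$ and $Kv\prec Lv$, together with $vK\prec vK^*$ and $vL\prec vL^*$, yield $vK^*\equiv vL^*$ over $vK$ and $K^*v\equiv L^*v$ over $Kv$. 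By the uniqueness of saturated models of a fixed cardinality in a complete theory, there are then isomorphisms $\phi_v\colon vK^*\to vL^*$ over $vK$ and $\phi_r\colon K^*v\to L^*v$ over $Kv$.

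Next, I would construct a valued field isomorphism $\Phi\colon K^*\to L^*$ over $K$ inducing $\phi_v$ on value groups and $\phi_r$ on residue fields, by back--and--forth. Enumerate $K^*$ and $L^*$ in order type $\kappa$; at stage $\alpha$, extend a partial $K$-isomorphism $f_\alpha\colon F_\alpha\to F'_\alpha$ between small valued subfields by adjoining one new element alternately from each side. The core one--point extension runs as follows: given $a\in K^*$ to be added to the domain, the tame hull $F_\alpha(a)^h\subseteq K^*$ (henselization inside $K^*$) has value group and residue field determined over $(vF_\alpha,F_\alpha v)$ by certain data living in $(vK^*,K^*v)$; transporting this data via $\phi_v,\phi_r$ and realizing the resulting type in the $\kappa$-saturated $L^*$ yields a candidate $a'\in L^*$. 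The embedding theorem for tame valued fields from \cite{[K4]}---to the effect that matched value group and residue field data over a tame base lift to a matching field element---extends $f_\alpha$ coherently to a $K$-isomorphism sending $a$ to $a'$. Taking unions along the back--and--forth produces the required $\Phi$.

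Finally, from the $K$-isomorphism $\Phi$ one deduces $(K^*,v)\equiv_K (L^*,v)$; combined with $(K,v)\prec(K^*,v)$ and $(L,v)\prec(L^*,v)$, for every formula $\varphi(\bar x)$ with parameters $\bar a\in K$,
\[
L\models\varphi(\bar a)\iff L^*\models\varphi(\bar a)\iff K^*\models\varphi(\bar a)\iff K\models\varphi(\bar a),
\]
yielding $(K,v)\prec(L,v)$. The principal obstacle is the one--point extension lemma: one must ensure that the extensions remain within the tame framework and are rigidly determined, up to $K$-isomorphism, by their value group and residue field data. This is precisely where tameness is indispensable, since tame extensions have trivial defect, so that the fundamental equality~(\ref{dle}) holds throughout the construction, allowing the value group and residue field data to fully determine the extension.
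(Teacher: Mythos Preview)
The paper does not contain a proof of this theorem; it is quoted from \cite{[K4]} and used as a black box. There is therefore nothing in the present paper to compare your proposal against.

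For what it is worth, your outline follows the overall strategy that underlies the proof in \cite{[K4]}: pass to saturated models and run a back-and-forth whose one-point extension step is powered by an embedding lemma for subfields of tame fields. Two comments on the sketch. First, invoking uniqueness of $\kappa$-saturated models of cardinality $\kappa$ requires a set-theoretic hypothesis (or the standard workaround via special models); this is routine but should be noted. Second, and more substantively, the entire difficulty sits in the one-point extension step, which you defer to an ``embedding theorem from \cite{[K4]}''. That embedding lemma is precisely the hard content of the result you are trying to prove; the intermediate fields $F_\alpha$ in the back-and-forth are generally not tame themselves, so the lemma must be stated for arbitrary valued subfields of a tame field and its proof requires a delicate analysis of immediate extensions, pseudo-Cauchy sequences, and the structure theory of tame fields. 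Your sketch correctly locates tameness (via defectlessness) as the reason the step goes through, but the outline as written is essentially ``assume the theorem in the form it is proved in \cite{[K4]} and deduce the theorem''.
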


This theorem enables us to prove:

\begin{theorem}                             \label{triangle}
Take a nontrivially valued tame field $(K,v)$ and two ordered abelian
groups $\Gamma$ and $\Delta$ such that $\Gamma\prec vK$ and $\Gamma
\prec\Delta$. Then there exist two tame fields $(K',v)$ and $(L,v)$ with
$vK'=\Gamma$, $vL=\Delta$, $Kv=K'v= Lv$, $(K',v)\prec (K,v)$ and
$(K',v)\prec (L,v)$. In particular, $(K,v)\equiv (L,v)$.
\end{theorem}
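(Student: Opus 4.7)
The plan is to build a common tame field $(K',v)$ that sits inside $(K,v)$ and inside a to-be-constructed $(L,v)$, with value group $\Gamma$ and residue field $Kv$, and then to apply Theorem~\ref{AKE} twice. Once tame extensions $(K',v) \subseteq (K,v)$ and $(K',v) \subseteq (L,v)$ are in hand with $vK' = \Gamma$, $vL = \Delta$, and $K'v = Lv = Kv$, Theorem~\ref{AKE} applied to each — using $\Gamma \prec vK$ and $\Gamma \prec \Delta$ on value groups and the trivial $Kv \prec Kv$ on residue fields — yields $(K',v) \prec (K,v)$ and $(K',v) \prec (L,v)$, whence $(K,v) \equiv (L,v)$.

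First I would construct $K'$ as a tame subfield of $K$. Identifying $\Gamma$ with its image in $vK$ under the given elementary embedding, I would lift a copy of the residue field $Kv$ to a subring of $\mathcal{O}_K$ and pick elements $a_\gamma \in K^{\times}$ with $v(a_\gamma) = \gamma$ for a generating family of $\Gamma$ as an abelian group. Let $F_0$ be the subfield of $K$ they generate; form $K'$ as the tame hull of the henselization of $F_0$ inside $K$. To conclude $vK' = \Gamma$, I would exploit that $\Gamma$ is pure in $vK$ (a first-order consequence of $\Gamma \prec vK$), so that division of existing values by integers prime to the residue characteristic stays inside $\Gamma$; to conclude $K'v = Kv$, I would use that $Kv$ is perfect (by tameness of $(K,v)$) and that the residues added by henselization and tame hull applied to $F_0$ are already in $Kv$.

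For the construction of $(L,v)$, I would proceed analogously but in the upward direction from $(K',v)$. Using the elementary embedding $\Gamma \hookrightarrow \Delta$, I would adjoin to $K'$ abstract elements realizing the new values in $\Delta \setminus \Gamma$, and then take the henselization and tame hull of the resulting field, carried out inside a sufficiently saturated ambient algebraically closed valued field. This yields $(L,v)$ as a tame field containing $(K',v)$ with $vL = \Delta$ and $Lv = Kv$, by the same purity/perfectness arguments as above (with $\Delta$ in place of $vK$).

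The main obstacle is the delicate control required in both tame-hull constructions to ensure that the value groups and residue fields come out \emph{exactly} as prescribed, without accidental enlargement. This is subtle in mixed characteristic, where one might fear $p$-th roots contributing new values or residues; however, the tameness hypothesis restricts ramification to its prime-to-$p$ part, and the elementary substructure relations $\Gamma \prec vK$ and $\Gamma \prec \Delta$ — which imply the purity needed to absorb the prime-to-$p$ divisions performed by the tame hull — are exactly what makes the construction close up cleanly. Once $K'$ and $L$ are in place, the two applications of Theorem~\ref{AKE} and transitivity of elementary equivalence finish the proof.
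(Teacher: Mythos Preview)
Your global strategy---build a tame subfield $(K',v)\subseteq (K,v)$ with $vK'=\Gamma$ and $K'v=Kv$, then a tame overfield $(L,v)\supseteq (K',v)$ with $vL=\Delta$ and $Lv=Kv$, and apply Theorem~\ref{AKE} twice---is exactly what the paper does. The problems lie in how you construct $K'$ and $L$.

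First, you propose to ``lift a copy of the residue field $Kv$ to a subring of $\mathcal{O}_K$''. In mixed characteristic---precisely the case this theorem is meant to cover---no such lift exists: a ring section of the residue map would force $p\cdot 1=0$ in characteristic $0$. The paper (Lemma~\ref{td}) lifts only a \emph{transcendence basis} of $Kv$ over its prime field, together with elements realizing a maximal $\Q$-independent subset of $\Gamma$ over $vK_0$.

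Second, the ``tame hull'' is not the right closure operation. Passing to the ramification field of the henselization does not in general produce a tame field; and for $L$, doing so inside an ambient algebraically closed valued field would enlarge the residue field to $(Kv)^{\mathrm{sep}}$ and the value group to the prime-to-$p$ divisible hull of $\Delta$, destroying both targets. Your purity remark controls divisions of $\Gamma$ inside $vK$, but there is no analogous ambient group in which $\Delta$ is pure. The paper instead argues as follows. For $K'$ it takes the \emph{relative algebraic closure} in $K$ of the generated subfield and invokes Lemma~\ref{trac}: a relatively algebraically closed subfield of a tame field with algebraic residue extension is itself tame, $vK/vK'$ is torsion free, and $K'v=Kv$; combined with $vK/\Gamma$ torsion free (from $\Gamma\prec vK$) this pins down $vK'=\Gamma$. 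For $L$ (Lemma~\ref{tu}) it first realizes $\Delta$ and $Kv$ via \cite[Theorem~2.14]{[K2]}, then passes to a \emph{maximal immediate algebraic extension}; the result is algebraically maximal, and since $\Delta$ is $p$-divisible (as $\Gamma\prec\Delta$ and $\Gamma=vK'$ is $p$-divisible by tameness) and $Kv$ is perfect, it is tame by \cite[Theorem~3.2]{[K4]}.
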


If $vK$ is nontrivial and divisible and $\Delta$ is any nontrivial
divisible ordered abelian group, then we can take $\Gamma=\Q$ to obtain
that $\Gamma\prec vK$ and $\Gamma\prec\Delta$ since the elementary
class of nontrivial divisible ordered abelian groups is model complete.
Thus, Theorem~\ref{triangle} yields the following result:

\begin{corollary}                           \label{tfeqtf}
If $(K,v)$ is a nontrivially valued tame field with divisible value
group and $\Delta$ is any nontrivial divisible ordered abelian group,
then there is a tame field $(L,v)\equiv (K,v)$ with $vL=\Delta$ and
$Lv=Kv$.
\end{corollary}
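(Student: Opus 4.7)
The plan is to obtain the corollary as an immediate consequence of Theorem~\ref{triangle} by exhibiting a single nontrivial divisible ordered abelian group $\Gamma$ that embeds elementarily into both $vK$ and $\Delta$. The natural candidate is $\Gamma=\Q$, which is itself a nontrivial divisible ordered abelian group and which sits as an ordered subgroup inside every such group.

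To justify that $\Q\prec vK$ and $\Q\prec \Delta$, I would invoke the classical fact that the theory of nontrivial divisible ordered abelian groups is model complete (indeed it admits quantifier elimination), so that every embedding between two models of the theory is elementary. Since by hypothesis $vK$ is nontrivial and divisible, and $\Delta$ is likewise nontrivial and divisible, any order-preserving inclusions $\Q\hookrightarrow vK$ and $\Q\hookrightarrow \Delta$ are elementary. This is precisely the input required by Theorem~\ref{triangle} with $\Gamma=\Q$.

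Applying Theorem~\ref{triangle} with this choice of $\Gamma$ then produces tame fields $(K',v)$ and $(L,v)$ with $vK'=\Q$, $vL=\Delta$, $K'v=Kv=Lv$, $(K',v)\prec(K,v)$, and $(K',v)\prec(L,v)$; the last clause of that theorem gives $(K,v)\equiv(L,v)$. Together with $vL=\Delta$ and $Lv=Kv$, this is exactly the statement of the corollary, and $(L,v)$ is tame by construction. Since the corollary reduces in this way to a direct specialization of Theorem~\ref{triangle}, there is no real obstacle; the only content beyond that theorem is the observation that $\Q$ meets the hypotheses thanks to model completeness of the theory of nontrivial divisible ordered abelian groups.
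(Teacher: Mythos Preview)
Your proposal is correct and essentially identical to the paper's own argument: the paper likewise takes $\Gamma=\Q$, invokes model completeness of the theory of nontrivial divisible ordered abelian groups to get $\Q\prec vK$ and $\Q\prec\Delta$, and then applies Theorem~\ref{triangle}. There is nothing to add.
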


It is easy to see that (\ref{AKEequiv}) cannot hold in this generality in
the mixed characteristic case. One can construct two algebraic
extensions $(L,v)$ and $(L',v')$ of $(\Q,v_p)$, where $v_p$ is the
$p$-adic valuation on $\Q$, both having residue field $\F_p$, such that:
\sn
1) $L$ does not contain $\sqrt{p}$ and $vL$ is the $p$-divisible hull of
$(v_pp)\Z$,
\n
2) $L'$ contains $\sqrt{p}$ and $v'L'$ is the $p$-divisible hull of
$(v_p\sqrt{p})\Z=\frac{1}{2}(v_pp)\Z$.
\sn
Then $vL\isom v'L'$ and hence $vL\equiv v'L'$, but $(L,v)\not\equiv (L',v')$.

\pars
One could hope, however, that this problem vanishes when one strengthens
the conditions by asking that $vL$ and $v'L'$ are equivalent over
$v_p\Q$ (and $Lv$ and $L'v'$ are equivalent over $\Q v_p$). But the
problem remains:

\begin{theorem}                             \label{tmcne}
Take any prime $p$. Then the there exist valued field extensions
\[
(\Q,v_p)\subset (L_0,v)\subset (L_1,v)\subset (L_2,v)\;\mbox{ and }\;
(\Q,v_p)\subset (F_0,v)\subset (F_1,v)\subset (F_2,v)
\]
such that the following assertions hold:
\sn
a) The fields $(L_0,v)$ and $(F_0,v)$ are extensions of degree $p(p-1)$ of the henselization
of $\Q$ under the $p$-adic valuation and extremal with $L_0v=\F_p=F_0v$ and $vL_0=vF_0=\frac{1}{p(p-1)}
(v_p p)\Z$, but $(L_0,v)\not\equiv (F_0,v)$.
\sn
b) The fields $(L_1,v)$ and $(F_1,v)$ are algebraic over $\Q$ and tame with $L_1 v=F_1 v=\F_p$ and
$vL_1=vF_1$ equal to the $p$-divisible hull of $\frac{1}{p-1} (v_p p)\Z$, but
$(L_1,v)\not\equiv (F_1,v)$.
\sn
c) The fields $(L_2,v)$ and $(F_2,v)$ are tame and extremal, with perfect
residue fields $L_2v=F_2v$ and $vL_2=vF_2=\Q$, but $(L_2,v)\not\equiv (F_2,v)$.
\end{theorem}

\begin{corollary}
The Ax--Kochen--Ershov Principle (\ref{AKEequiv}) fails for extremal fields
with value group isomorphic to $\Z$ in mixed characteristic. It also fails for tame extremal fields
with value group isomorphic to $\Q$ and perfect residue field in mixed characteristic.
\end{corollary}
\sn
{\bf Open problem:} Can the situation be improved by adding the Macintyre power predicates to the language?

\pars
Note that $(L,v)\equiv (L',v')$ if and only if they are equivalent over
$(\Q,v_p)$, and this in turn holds if and only if we have the
equivalence
\[
(L,v)_\delta\equiv (L',v')_\delta\quad\mbox{over}\quad (\Q,v_p)_\delta
\]
of their amc structures of level $\delta$, for all $\delta\in (v_pp)\Z$
(see \cite[Corollary 2.4]{[K0]}). But this fact is of little use for
the proof of Corollary~\ref{tfeqtf} since it is by no means clear how to
construct an extension of $(\Q,v_p)$ whose amc structures of level
$\delta$ are equivalent to those of $(K,v)$.

\parm
The improvement in Theorem~\ref{gen} yields a corresponding improvement
of Proposition~5.3 from \cite{[AKP]}. Note that when we speak of a
composition $v=w\circ\ovl{w}$ of valuations, we do not mean a composition
as functions, but in fact refer to the composition of their associated places. That is, if $Q$ and $\bar{Q}$ are
the places associated with $w$ and $\bar{w}$, then their composition (with the obvious additional rules for
$\infty$) is the place associated with $v$.

\begin{proposition}                         \label{compimpr}
Take a valued field $(K,v)$ with perfect residue field. Assume that $v$
is the composition of two nontrivial valuations: $v=w\circ\ovl{w}$. Then
$(K,v)$ is extremal with divisible value group if and only if the same
holds for $(K,w)$ and $(Kw,\ovl{w})$.
\end{proposition}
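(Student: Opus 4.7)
The plan is to translate the condition ``extremal with divisible value group'' via Theorem \ref{gen}: under the standing hypothesis that $Kv$ is perfect, it is equivalent to $(K,v)$ being algebraically complete with $vK$ divisible and $Kv$ large --- equivalently, $(K,v)$ is a tame field with large residue field. Once the problem is rephrased in these terms, it reduces to checking that algebraic completeness, divisibility of the value group, perfectness of the residue field, and largeness of the residue field all behave well under the composition $v=w\circ\ovl{w}$.

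For the reverse direction, assume $(K,w)$ and $(Kw,\ovl{w})$ are extremal with divisible value groups. Divisibility of $vK$ follows from the short exact sequence $0\to\ovl{w}(Kw)\to vK\to wK\to 0$, since convex subgroups and quotients of divisible ordered abelian groups are divisible. Theorem \ref{gen} applied to each factor yields algebraic completeness, and then for any finite extension $L|K$ the multiplicativity of ramification index and residue degree gives $[L:K]=(wL:wK)[Lw:Kw]=(wL:wK)(\ovl{w}Lw:\ovl{w}Kw)[Lv:Kv]=(vL:vK)[Lv:Kv]$, so $(K,v)$ is algebraically complete. The residue field $Kv=(Kw)\ovl{w}$ is large by Theorem \ref{gen} applied to $(Kw,\ovl{w})$, and perfect by hypothesis, so the converse of Theorem \ref{gen} produces extremality of $(K,v)$.

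For the forward direction, assume $(K,v)$ is extremal with $vK$ divisible. Divisibility of $wK$ and $\ovl{w}(Kw)$ again follows from the exact sequence. Theorem \ref{gen} gives algebraic completeness of $(K,v)$; combined with $p$-divisibility of $vK$ and perfectness of $Kv$, this makes $(K,v)$ a tame field. Tameness is preserved under composition of valuations (cf.\ \cite{[K4]}), so both $(K,w)$ and $(Kw,\ovl{w})$ are tame. In particular, both are algebraically complete, and the residue fields $Kw$ and $Kv$ are perfect. For largeness: $Kv$ is large by Theorem \ref{gen} applied to $(K,v)$, while $Kw$ is large by Pop's theorem, because $\ovl{w}$ is a nontrivial henselian valuation on $Kw$. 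The converse of Theorem \ref{gen} now yields that $(K,w)$ and $(Kw,\ovl{w})$ are extremal with divisible value groups.

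The main obstacle is the forward direction, and specifically the verification that $Kw$ is both perfect and large --- neither of which is part of the given hypothesis on $(K,v)$. Perfectness drops out once we upgrade $(K,w)$ to a tame field via the composition stability of tameness, and largeness then follows from Pop's theorem applied to the nontrivial henselian valuation $\ovl{w}$ on $Kw$. The whole argument depends on having the converse of Theorem \ref{gen} available in mixed characteristic; this is precisely the improvement over \cite{[AKP]} that removes the equal-characteristic restriction from the original Proposition 5.3.
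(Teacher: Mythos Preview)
Your proof is correct and follows essentially the same route as the paper. The paper's argument is terser: it lists three facts --- transfer of algebraic completeness in both directions, transfer of divisibility via the exact sequence, and the observation that if $(Kw,\ovl{w})$ is algebraically complete with divisible value group and perfect residue field then $Kw$ is perfect (via \cite[Theorem~3.2, Lemma~3.1]{[K4]}) and large (via \cite[Proposition~16]{[K2a]}) --- and declares the result a consequence of Theorem~\ref{gen}. Your forward direction packages the same content as ``tameness is preserved under composition'' together with ``Pop's theorem'' for the largeness of $Kw$, which are exactly the results the paper invokes under different names.
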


We may say that a property $P$ of valuations is {\bf compatible with composition}
if $P(v)\Leftrightarrow P(w)\wedge P(\bar{w})$ for each composition $v=w\circ \bar{w}$.
Examples of such properties are ``henselian'', ``maximal'', ``algebraically complete'',
``divisible value group''. The latter two will be used in the proof of the proposition,
given in Section~\ref{sect2}. The proposition in fact yields that also the property
``extremal with divisible value group and perfect residue field'' is compatible with
composition (since if $(Kw,\bar{w})$ has this property, then in particular it is perfect).

It should be noted that the condition on the value groups cannot be
dropped without a suitable replacement, even when all residue fields
have characteristic $0$. Indeed, if the value group of $(K,w)$ is a
$\Z$-group and $\ovl{w}$ is nontrivial, then the value group of $(K,v)$
is neither divisible nor a $\Z$-group and $(K,v)$ cannot be extremal.
\pars
Let us state two\n
{\bf Open problems:}\sn
{\bf 1)} \ If $v=w\circ\bar{w}$ with $w$ and $\bar{w}$ extremal and $w$ having divisible value group,
does it follow that $v$ is extremal? \sn
{\bf 2)} \ We know that if $v=w\circ\bar{w}$ is extremal, then so is $\bar{w}$ (see Lemma~\ref{vexbarwex} below).
But we do not know whether it follows that also $w$ is extremal.

\parm
Tame fields of positive residue characteristic $p>0$ are
algebraically complete, and by \cite[Theorem~3.2]{[K4]}, they have
$p$-divisible value groups which consequently are not
$\mathbb{Z}$-groups. On the other hand, by the same theorem all
algebraically complete valued fields with divisible value group and
perfect residue field are tame fields. Therefore, in the case of
positive residue characteristic and value groups that are not
$\mathbb{Z}$-groups, the above Theorem~\ref{gen} is in fact talking
about tame fields:

\begin{theorem}                             \label{taex}
A tame field of positive residue characteristic is extremal if and only
if its value group is divisible and its residue field is large.
\end{theorem}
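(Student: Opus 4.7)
The plan is to deduce Theorem~\ref{taex} directly from Theorem~\ref{gen}, exploiting the two ingredients already recalled in the paragraph preceding the statement: first, tame fields of positive residue characteristic are algebraically complete with $p$-divisible value groups by \cite[Theorem~3.2]{[K4]}; second, algebraically complete valued fields with divisible value group and perfect residue field are themselves tame, by the same theorem.

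For the forward direction I would take $(K,v)$ to be an extremal tame field of residue characteristic $p>0$ and invoke Theorem~\ref{gen}. The dichotomy there offers two options, but option (i)---that $vK$ be a $\mathbb{Z}$-group---is incompatible with the $p$-divisibility of $vK$ forced by tameness, since a nontrivial $p$-divisible ordered abelian group has no minimal positive element. Hence option (ii) must hold: $vK$ is divisible and $Kv$ is large.

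For the converse I would assume $(K,v)$ is tame with $vK$ divisible and $Kv$ large. Tameness supplies algebraic completeness via \cite[Lemma~3.1]{[K4]}, and perfectness of $Kv$ is a standard consequence of the definition of tameness: the ramification field of $(\tilde{K}|K,v)$ has residue field $Kv\sep$, so the assumed equality of this ramification field with $\tilde{K}$ forces $Kv\sep=\ovl{Kv}$, i.e.\ $Kv$ is perfect. The converse half of Theorem~\ref{gen} then applies and yields extremality.

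There is no real obstacle here; the entire content of Theorem~\ref{taex} is that in positive residue characteristic, tameness simultaneously eliminates the $\mathbb{Z}$-group branch of the dichotomy in Theorem~\ref{gen} and installs the algebraic completeness and perfectness required for its converse. Thus the two-case statement of Theorem~\ref{gen} collapses into the clean equivalence asserted here, and the proof is essentially bookkeeping.
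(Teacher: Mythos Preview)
Your proposal is correct and follows essentially the same route as the paper: the paper does not give Theorem~\ref{taex} a separate proof, but derives it in the paragraph immediately preceding the statement by observing that tameness in positive residue characteristic forces algebraic completeness and $p$-divisibility of the value group (hence excludes the $\mathbb{Z}$-group branch), while conversely algebraic completeness together with divisible value group and perfect residue field already characterizes tame fields---so Theorem~\ref{gen} collapses to the asserted equivalence. Your added ramification-theoretic justification for perfectness of $Kv$ simply spells out what the paper leaves to the cited \cite[Theorem~3.2]{[K4]}.
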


Again, we see that we know almost everything about tame fields (with the
exception of quantifier elimination in the case of equal
characteristic), but almost nothing about imperfect valued fields. As
shown in \cite{[AKP]}, there are some algebraically complete valued
fields with value group a $\mathbb{Z}$-group and a finite residue field
that are extremal, and others that are not. In particular, the Laurent
series field $\F_q((t))$ over a finite field $\F_q$ with $q$ elements is
extremal.

It is a longstanding open question whether $\F_q((t))$ has a
decidable elementary theory. However, in recent years progress has been made
on the existential theory. Denef and Schoutens showed in \cite{[DS]} that if
Resolution of Singularity holds in positive characteristic in all dimensions
(which is a longstanding open problem), then the existential theory of
$(\F_q((t)),t)$ --- i.e., the field together
with the constant $t$ --- is decidable. More recently, Anscombe and Fehm
showed in \cite{[AF]} that the existential theory of $\F_q((t))$ is decidable,
under no assumptions.

Since the question for the full elementary theory has remained open,
it is important to search for a complete
recursive axiomatization. Such an axiomatization was suggested in
\cite{[K1]}, using the elementary property that the images of additive
polynomials have the optimal approximation property (see
Section~\ref{sectoap} for the definition of this notion). For the case
of $\F_q((t))$, this was proved in \cite{[DK]}. At first sight,
extremality seems to imply the optimal approximation property for the
images of additive polynomials. But the latter uses inputs from the
whole field while the former restricts to inputs from the valuation
ring. However, we will prove in Section~\ref{sectoap}:

\begin{theorem}                             \label{OAP}
If $(K,v)$ is an extremal field of characteristic $p>0$ with
$[K:K^p]<\infty$, then the images of all additive polynomials have the
optimal approximation property.
\end{theorem}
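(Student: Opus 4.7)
The plan is to use extremality to get the maximum over $\mathcal{O}^n$, then promote it to $K^n$ using the additive structure of $f$ and the finite $p$-degree hypothesis. Fix $a \in K$ and an additive polynomial $f(X_1,\ldots,X_n)\in K[X_1,\ldots,X_n]$. Setting $g := a - f$, extremality yields that $\alpha_0 := \max\{v(g(b)) : b \in \mathcal{O}^n\}$ exists. More generally, for any $\gamma \in vK$ with $\gamma \geq 0$, applying extremality to the rescaled polynomial $g(c_\gamma Y_1, \ldots, c_\gamma Y_n)$ for an element $c_\gamma \in K$ of valuation $-\gamma$ yields $\alpha_\gamma := \max\{v(g(b)) : b \in K^n,\ v(b_j) \geq -\gamma \text{ for all } j\}$. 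The family $(\alpha_\gamma)_{\gamma \geq 0}$ is nondecreasing and its supremum equals the supremum we wish to attain, so it suffices to show that this family stabilizes at some finite $\gamma^*$.

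In the single-variable case $n=1$, stabilization is straightforward: for additive $f(X) = \sum_{i \leq r} c_i X^{p^i}$ of degree $p^r \geq p$, if $v(b)$ is sufficiently negative then the term $c_r b^{p^r}$ strictly dominates, giving $v(f(b)) = v(c_r) + p^r v(b) \to -\infty$, so $v(a - f(b)) \to -\infty$ as well, and stabilization follows. The multi-variable case is subtler because cancellations among the summands $f_j(b_j)$ in $f(b) = \sum_{j=1}^n f_j(b_j)$ permit $v(f(b))$ to remain bounded even when several of the $v(b_j)$ tend to $-\infty$, so the simple dominance argument fails.

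To handle these cancellations, the plan is to fix a $p$-basis $\{\theta_1, \ldots, \theta_e\}$ of $K$ over $K^p$, which is finite by hypothesis. Every $x \in K$ has a unique expansion $x = \sum_I x_I^p \theta^I$ indexed by $I \in \{0,\ldots,p-1\}^e$; applying this coordinate-by-coordinate to each $b_j$, and iterating for the higher Frobenius powers occurring in $f$, decomposes $f(b)$ into an $\mathbb{F}_p$-linear combination of $p^i$-th powers of expressions amenable to the single-variable analysis. The goal is to show: for every $b \in K^n$ with $v(g(b)) > v(a)$, we may subtract a suitable element of $\ker(f|_{K^n})$ from $b$ to produce a $b' \in K^n$ with $f(b') = f(b)$ and $v(b'_j) \geq -\gamma^*$ for all $j$, where $\gamma^*$ depends only on $f$ and $a$. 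Granting this, $\sup_\gamma \alpha_\gamma = \alpha_{\gamma^*}$, which is attained.

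The main obstacle is precisely this normalization step, and it is where $[K:K^p]<\infty$ enters essentially. The finiteness of the $p$-basis gives a finite-dimensional coordinate description of $K$ in which all cancellations of $p^i$-th powers occurring in $f(b)$ can be tracked explicitly, so that the ``unbounded part'' of $b$ can be recognized as an element of $\ker(f|_{K^n})$ and deleted without changing $f(b)$. Without such a finite decomposition one could not extract a uniform bound $\gamma^*$ from the structure of $f$ alone; with it, the reduction to $\mathcal{O}^n$ succeeds and extremality finishes the proof.
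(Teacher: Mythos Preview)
Your overall strategy---show that the maxima $\alpha_\gamma$ over larger and larger balls stabilize---is the same as the paper's, but the crucial step is left as a plan rather than a proof. You assert that whenever $v(a-f(b))$ is large while some $v(b_j)$ is very negative, the ``unbounded part'' of $b$ can be recognized as an element of $\ker(f|_{K^n})$ and subtracted off. This is exactly the hard point, and nothing you write establishes it. A $p$-basis lets you expand each $b_j$, but it does not by itself produce a kernel element approximating $b$; you would still need a structural statement about how the additive polynomial interacts with those expansions. You also never use that extremal fields are algebraically complete (hence inseparably defectless), and without that hypothesis the $p$-basis does not give the valuation control you need.

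The paper closes this gap by a different decomposition. Using that $(K,v)$ is inseparably defectless with $[K:K^p]<\infty$, one proves (adapting \cite{[DK]}) that $f(K^n)=g_1(K)+\cdots+g_m(K)$ for one-variable additive polynomials $g_i$ all of the \emph{same} degree $p^\nu$ whose leading coefficients $b_1,\ldots,b_m$ are \emph{valuation independent} over $K^{p^\nu}$. This is the key: valuation independence rules out leading-term cancellation, so for inputs with sufficiently negative valuation the dominant terms $b_i a_i^{p^\nu}$ have pairwise distinct values and one computes $v\big(\sum g_i(a_i)+c\big)=\min_i(vb_i+p^\nu va_i)$ directly. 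Hence any tuple outside a fixed ball $B_\alpha(0)^m$ gives strictly smaller value than every tuple inside, and extremality on that ball finishes the argument. Your $p$-basis idea is heading in the right direction, but the actual content is this valuation-independent decomposition, which you have not obtained.
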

\sn
{\bf Open problem:} \sn
Does the assertion of this theorem fail in the case of $[K:K^p]=\infty$?

\parm
Since the elementary property of extremality is more comprehensive and
easier to formulate than the optimal approximation property, it is
therefore a good idea to replace the latter by the former in the
proposed axiom system for $\F_q((t))$. We also note that every extremal
field is algebraically complete by Theorem~\ref{gen}. So we ask:

\sn
{\bf Open problem:} Is the following axiom system for the
elementary theory of $\F_q((t))$ complete?
\sn
1) $(K,v)$ is an extremal valued field of positive characteristic,
\n
2) $vK$ is a $\Z$-group,
\n
3) $Kv=\F_q\,$.

\pars
In order to obtain the assertion of Theorem~\ref{OAP} in the case of
algebraically complete perfect fields of positive characteristic (which
are exactly the tame fields of positive characteristic), one does not
need the assumption that the field be extremal. Indeed, S.~Durhan
recently proved in \cite{[D]}:

\begin{theorem}
If $(K,v)$ is a tame field of positive characteristic, then the images
of all additive polynomials have the optimal approximation property.
\end{theorem}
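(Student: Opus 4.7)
My plan is to argue by contradiction, combining the algebraic maximality of tame fields with the ultrametric factorization of $v(a-f(b))$ obtained from the roots of $f(X)-a$ in the algebraic closure $\tilde K$.

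\textbf{Reductions.} Since a tame field of positive characteristic is perfect (Theorem~3.2 of \cite{[K4]}), the Frobenius $F\colon K\to K$ is bijective. Writing $f(X)=g(X^{p^k})$ with $g$ separable additive, perfectness yields $f(K)=g(K^{p^k})=g(K)$, so we may replace $f$ by $g$ and assume $f(X)=c_0 X+c_1 X^p+\cdots+c_n X^{p^n}$ with $c_0\neq 0$. Fix $a\in K$; if $a\in f(K)$ the supremum equals $\infty$ and is attained, so we assume $a\notin f(K)$ and let $\alpha_1,\ldots,\alpha_{p^n}\in\tilde K$ be the distinct roots of $f(X)-a$. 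None lies in $K$, and factoring $f(X)-a$ over $\tilde K$ yields
\[
v(a-f(b)) \>=\> v(c_n) + \sum_{j=1}^{p^n} v(b-\alpha_j)\qquad\text{for every }b\in K.
\]

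\textbf{Distances are attained.} Since $(K,v)$ is tame it is algebraically maximal (Lemma~3.1 of \cite{[K4]}). A standard application of Kaplansky's theorem on pseudo-Cauchy sequences shows that for every algebraic $\beta\in\tilde K\setminus K$, the distance $d(\beta,K):=\sup_{b\in K}v(\beta-b)$ is attained in $K$: otherwise a sequence $(b_\nu)\subseteq K$ with $v(\beta-b_\nu)$ strictly increasing would be pseudo-Cauchy with $\beta$ as a pseudo-limit and of algebraic type (witnessed by the minimal polynomial of $\beta$, which is irreducible of degree $\geq 2$), producing via Kaplansky's theorem a proper immediate algebraic extension of $K$.

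\textbf{Main argument.} Suppose for contradiction that $\sigma:=\sup_{b\in K}v(a-f(b))$ is not attained, and choose $(b_\nu)\subseteq K$ with $\gamma_\nu:=v(a-f(b_\nu))$ strictly increasing. For each $\nu$, let $i(\nu)\in\{1,\ldots,p^n\}$ be an index such that $\alpha_{i(\nu)}$ is closest to $b_\nu$; by pigeonhole some $i_0$ occurs for infinitely many $\nu$, and we pass to a subsequence with $i(\nu)=i_0$ throughout. Setting $t_\nu:=v(b_\nu-\alpha_{i_0})$, the ultrametric inequality together with the closeness of $\alpha_{i_0}$ to $b_\nu$ forces $v(b_\nu-\alpha_j)=\min(t_\nu,v(\alpha_{i_0}-\alpha_j))$ for $j\neq i_0$, so
\[
\gamma_\nu - v(c_n) \>=\> t_\nu + \sum_{j\neq i_0}\min\bigl(t_\nu,\,v(\alpha_{i_0}-\alpha_j)\bigr),
\]
a strictly increasing function of $t_\nu$. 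Consequently $t_\nu$ is strictly increasing and bounded by $D:=d(\alpha_{i_0},K)$, so $t_\nu<D$ for every $\nu$. Let $b^*\in K$ realize $D$ (which exists by Paragraph~2); the ultrametric inequality gives $v(b^*-\alpha_j)\geq\min(D,v(\alpha_{i_0}-\alpha_j))$ for each $j\neq i_0$, whence
\[
v(a-f(b^*))-\gamma_\nu \>\geq\> (D-t_\nu) + \sum_{j\neq i_0}\bigl[\min(D,v(\alpha_{i_0}-\alpha_j))-\min(t_\nu,v(\alpha_{i_0}-\alpha_j))\bigr] \>>\> 0
\]
for every $\nu$, since the first term is strictly positive and each remaining summand is nonnegative by monotonicity of $\min$. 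Hence $v(a-f(b^*))>\gamma_\nu$ for every $\nu$, so $v(a-f(b^*))\geq\sigma$, contradicting the non-attainment of $\sigma$. The main obstacle I anticipate is setting up the pigeonhole step correctly so that the ultrametric identity for $v(b_\nu-\alpha_j)$ holds uniformly along the chosen subsequence; once this is in place, the comparison with $b^*$ requires only the one-sided ultrametric bound and does not need $\alpha_{i_0}$ to remain closest to $b^*$.
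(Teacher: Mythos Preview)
The paper does not give its own proof of this result; it is quoted as a recent theorem of Durhan \cite{[D]}. So there is no in-paper argument to compare yours against.

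Your argument for a \emph{single} variable is essentially correct. Two small points deserve mention. First, you should choose the $\gamma_\nu$ cofinal in $\{v(a-f(b)):b\in K\}$, not merely strictly increasing; otherwise the step ``$v(a-f(b^*))>\gamma_\nu$ for all $\nu$ implies $v(a-f(b^*))\geq\sigma$'' is unjustified. Second, the claim that your pseudo-Cauchy sequence is of algebraic type is cleaner to justify by observing that it has an \emph{algebraic} pseudo-limit (namely $\beta$), and Kaplansky's theorem then forces the sequence to be of algebraic type; this avoids tracking the values of the minimal polynomial along the sequence by hand.

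There is, however, a genuine gap: the theorem concerns additive polynomials in \emph{several} variables (see the definitions in Section~\ref{sectoap}), while your reduction to the separable case and the factorization $v(a-f(b))=v(c_n)+\sum_j v(b-\alpha_j)$ make sense only in one variable. The image of a multivariate additive $f$ is a sum $\sum_i f_i(K)$ of images of one-variable additive polynomials, and the optimal approximation property for each summand does not obviously pass to the sum. In the present setting this can be repaired: a tame field of positive characteristic is perfect, so $[K:K^p]=1$ and Proposition~\ref{gi} applies; condition~(c) there (valuation independence of the leading coefficients over $K^{p^\nu}=K$) then forces $m=1$, so that $f(K^n)=g(K)$ for a single one-variable additive polynomial $g$, to which your argument applies. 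But you have not carried out this reduction, and without it the proof is incomplete.
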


There are tame fields of positive characteristic that are not extremal,
e.g.\ the power series field $\F_p((\Gamma))$ with $\Gamma$ the $p$-divisible hull
of $\Z$ (see Theorem~\ref{taex}). Therefore, the previous theorem yields:

\begin{corollary}
There are perfect non-extremal fields of positive characteristic in which the images
of all additive polynomials have the optimal approximation property.
\end{corollary}
\sn
{\bf Open problem:} \sn
Is there an imperfect non-extremal field of characteristic $p>0$ in which the images
of all additive polynomials have the optimal approximation property?

\parb
Finally, let us point out that we still do not have a complete
characterization of extremal fields:
\sn
{\bf Open problem:} Take a valued field $(K,v)$ of positive residue
characteristic. Assume that $vK$ is a $\Z$-group, or that $vK$ is
divisible and $Kv$ is an imperfect large field. Under which additional
assumptions do we obtain that $(K,v)$ is extremal?

\pars
Additional assumptions are indeed needed, as we will show in
Section~\ref{sectexextr}:

\begin{proposition}                         \label{exextr}
\n
a) \ There are algebraically complete valued fields $(K,v)$ of positive
characteristic and value group a $\Z$-group that are extremal, and
others that are not.
\sn
b) \ There are algebraically complete valued fields $(K,v)$ of mixed
characteristic with value group a $\Z$-group that are extremal, and
others that are not.
\sn
c) \ There are algebraically complete nontrivially
valued fields $(K,v)$ of positive characteristic with divisible value
group and imperfect large residue field that are extremal, and
others that are not.
\sn
d) \ There are algebraically complete valued fields $(K,v)$ of mixed
characteristic with divisible value group and imperfect large residue
field that are extremal, and others that are not.
\end{proposition}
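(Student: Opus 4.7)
The proposition bundles six existence statements: extremal and non-extremal algebraically complete fields for each of (a) and (b), and non-extremal algebraically complete fields for (c) and (d). The extremal $\Z$-group witnesses are classical: $\Fp((t))$ (more generally $\F_q((t))$) works for (a) and $\Q_p$ for (b); both are shown extremal in \cite{[AKP]}. For the non-extremal examples with $\Z$-group value group, I would start from one of these fields and pass to a carefully chosen algebraic extension that remains algebraically maximal (hence algebraically complete) but fails to be maximally complete. A pseudo-Cauchy sequence of algebraic elements with no limit in the chosen field then supplies a one-variable polynomial whose values on ${\cal O}$ approach but do not attain their supremum; the defectless character of an algebraically maximal henselian field guarantees that algebraic completeness is preserved.

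For (c) and (d), the starting point is a large non-perfect field $k$ of the appropriate characteristic (in (d), coupled with a characteristic-zero lift via Witt vectors). The natural candidate for $(K,v)$ is the Hahn (Mal'cev--Neumann) series field $k((t^{\Q}))$ in the positive-characteristic case, and the mixed-characteristic analogue built from the Witt ring of $k((t^{\Q}))$ equipped with a divisible extension of the $p$-adic valuation in case (d). These are maximally complete and therefore algebraically complete, and by construction have divisible value group and large non-perfect residue field. To detect non-extremality we use the fact that the converse direction of Theorem~\ref{gen} requires perfectness, and exhibit a polynomial witnessing the failure of $p$-closure, typically of the shape $f(X_1,X_2)=X_1^p-aX_2^p-b$ with $a\in k\setminus k^p$ and $b\in{\cal O}$ carefully chosen, for which $\sup\{v(f(x_1,x_2))\mid x_1,x_2\in{\cal O}\}$ is finite but not attained.

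The hardest step is verifying this last non-attainment. In a maximally complete field every one-variable pseudo-Cauchy sequence converges, so the non-extremality must come from genuinely multi-variable behaviour: the image $\{X_1^p-aX_2^p\mid X_1,X_2\in{\cal O}\}$ is much larger than $\{X_1^p\mid X_1\in{\cal O}\}$, yet still misses a dense subset of ${\cal O}$ once $a\notin k^p$. One must analyse the coefficient-wise structure of $p$-th powers in $K$ against the divisibility of $vK$ to pin down a $b$ that is approachable but not representable. This coordinate-wise analysis, together with the parallel issue in the mixed-characteristic Witt-vector setting, is the core technical difficulty of the proposition.
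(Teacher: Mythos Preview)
Your extremal witnesses for (a) and (b) match the paper's. The rest of the proposal, however, diverges from the paper's argument and contains real gaps.

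For the non-extremal case in (a) you propose passing to an algebraic extension that ``remains algebraically maximal (hence algebraically complete)''. In positive characteristic this implication is false: algebraically maximal does \emph{not} imply algebraically complete (defectless). So the parenthetical is wrong, and you would have to verify algebraic completeness separately. Moreover, once the field is algebraically maximal every pseudo-Cauchy sequence of algebraic type has a limit, so ``a pseudo-Cauchy sequence of algebraic elements with no limit'' cannot exist there; the one-variable polynomial mechanism you sketch does not get off the ground. The paper does not attempt anything of this sort: it simply cites the explicit field $(L,v)\supseteq(\F_p((t)),v_t)$ constructed in \cite{[K1]} and shown in \cite{[AKP]} to be algebraically complete, to have a $\Z$-group value group of rank~$2$, and to be non-extremal.

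For (c) and (d) your Hahn-field approach is not known to work, and the paper in fact singles this out as an open problem (``Is there any extremal field with divisible value group and non-perfect large residue field?''). Your candidate polynomial $X_1^p-aX_2^p-b$ does not witness non-extremality in $k((t^{\Q}))$: if the leading coefficient of $b$ lies outside $k^p+ak^p$, then $v(x_1^p-ax_2^p-b)\le vb$ for all $x_1,x_2\in{\cal O}$, and this bound is \emph{attained} at $x_1=x_2=0$; if instead every coefficient of $b$ lies in $k^p+ak^p$, then $b$ itself is in the image since the support is well-ordered. So the ``core technical difficulty'' you flag is not just hard---it is the wrong computation.

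The paper's route is entirely different and avoids all of this. For (c) it coarsens the same field $(L,v)$ from \cite{[K1]} along the convex subgroup $(v_tt)\Z$ to obtain $(L,w)$ with divisible value group and residue field $\F_p((t))$ (large, non-perfect), quoting \cite{[AKP]} for non-extremality. For the mixed-characteristic non-extremal examples in (b) and (d) it builds, via \cite[Theorem~2.14]{[K2]}, an extension $(M,v_0)$ of $(\Q,v_p)$ with divisible value group and residue field $L$, passes to a maximal immediate extension, and then forms the compositions $v_0\circ v$ and $v_0\circ w$. The key lemma is: if $(K,v)$ is extremal and $v=w\circ\ovl{w}$, then $(Kw,\ovl{w})$ is extremal. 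Its contrapositive transfers the non-extremality of $(L,v)$ and $(L,w)$ up to $(M,v_0\circ v)$ and $(M,v_0\circ w)$, giving (b) and (d) respectively. No polynomial is ever exhibited directly; everything is leveraged from the single example in \cite{[K1]} through composition of valuations.
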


None of the non-extremal fields that we construct for the proof of parts a)--d) of this proposition is maximal.
This leads us to the following\sn
{\bf Conjecture:} \ Every maximal field with value group a $\Z$-group, or divisible value group and large residue field, is extremal.

\pars
The following theorem, also proved in
Section~\ref{sectexextr}, provides a compelling way of constructing maximal extremal fields and is used
in the proof of parts c) and d) of the previous theorem.

\begin{theorem}\label{thm:extremal.from.saturated}
Let $(K,v)$ be any $\aleph_{1}$-saturated valued field. Assume that $\Gamma$ and $\Delta$ are convex subgroups of
$vK$ such that $\Delta\subsetuneq\Gamma$ and $\Gamma/\Delta$ is archimedean. Let $u$ (respectively $w$) be the
coarsening of $v$ corresponding to $\Delta$ (resp. $\Gamma$). Denote by $\bar{u}$ the valuation induced on $Kw$ by
$u$. Then $(Kw,\bar{u})$ is maximal, extremal and large, and its value group is isomorphic either to $\Z$ or to $\R$.
In the latter case, also  $Ku=(Kw)\bar{u}$ is large.
\end{theorem}

\begin{remark}\label{rem:gamma.delta}
Pairs $(\Gamma,\Delta)$ of convex subgroups satisfying the conditions of this theorem are abundant and can easily
be constructed. Indeed, for any $\gamma\in vK$ we can take $\Gamma$ to be the smallest convex subgroup of $vK$
containing $\gamma$ (the intersection of all convex subgroups of $vK$ containing $\gamma$), and $\Delta$ to be the
largest convex subgroup of $vK$ not containing $\gamma$ (the union of all convex subgroups of $vK$ not containing
$\gamma$). Then $\Delta$ is the largest proper convex subgroup of $\Gamma$ and therefore, $\Gamma/\Delta$ is
archimedean.
\end{remark}
\pars
From Theorem~\ref{thm:extremal.from.saturated} we can derive an interesting observation about infinite
compositions of henselian valuations.
Note that every valuation can be viewed as a possibly infinite composition of rank 1 valuations, i.e., valuations
with archimedean ordered value groups. It is well known that $v=w\circ\bar{w}$ is henselian if and only if both $w$
and $\bar{w}$ are. However, in Section~\ref{sectexextr} we will derive the following result:

\begin{corollary}                                        \label{non-comp}
There exist non-large (and therefore non-henselian) valued fields $(K,v)$ with the following property: if $v=w_1
\circ w_2\circ w_3$ with $w_2$ of rank 1, then $w_2$ is henselian and both $Kw_1$ and $(Kw_1)w_2$ are large.
\end{corollary}
The part about henselianity also follows from an actually stronger result, stating the existence of a non-henselian
valued field $(K,v)$ with the following property: if $v=w_1\circ w_2$ with nontrivial $w_1$, then $w_2$ is
henselian; see \cite[Proposition 4]{[K3a]}. The latter again implies that $Kw_1$ is large, but we do not
know how to show that the field constructed in the cited paper is not large.

\bn\bn
%\newpage\noindent
{\bf Acknowlegements.} \n
Several of the ideas contained in this paper were conceived at a 2 hour seminar
talk the second author gave to the logic group at the University of Wroclaw in
Poland. The audience was arguably the most lively and inspiring the
author has ever witnessed. He would like to thank this group for the
great hospitality.
\pars
The authors would like to thank the referee for his careful reading of the manuscript and for several
very useful suggestions that inspired them to come up with
Theorem~\ref{thm:extremal.from.saturated} and with a new version of Theorem~\ref{tmcne}.
\pars
The second author would like to thank Koushik Pal for proofreading an earlier version of the paper, and Anna Blaszczok for very helpful corrections and comments on a later version.
\pars
During this research, the first author was funded by EPSRC grant EP/K020692/1, and the second author
was partially supported by a Canadian NSERC grant and a sabbatical grant from the University of Saskatchewan.

%
%???????????????????????????????????????????????????????????????????????
%
\section{Proof of Theorems~\ref{gen},~\ref{triangle} and~\ref{tmcne},
and Proposition~\ref{compimpr}}             \label{sect2}
As a preparation, we need a few basic facts about tame fields. For the
following lemma, see \cite[Lemma~3.7]{[K4]}:

\begin{lemma}                               \label{trac}
Take a tame field $(L,v)$. If $K$ is a relatively algebraically closed
subfield of $L$ such that $Lv|Kv$ is algebraic, then $(K,v)$ is a tame
field, $vL/vK$ is torsion free, and $Lv=Kv$.
\end{lemma}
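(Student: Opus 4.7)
The plan is to deduce the three conclusions from a single technical claim about finite extensions of $K$, proved via the tameness of $L$.

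That $(K,v)$ is henselian follows from the standard argument: the henselization $K^h$ of $K$ is separable-algebraic over $K$ and embeds into every henselian extension of $K$, in particular into $L$, so by relative algebraic closure $K^h=K$.

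The technical claim is that for every finite extension $F|K$ (inside a fixed algebraic closure $\tilde L$ of $L$), $F|K$ is tame, $vF\cap vL=vK$, and $Fv\cap Lv=Kv$, the intersections being taken inside $v\tilde L$ and $\tilde Lv$. Since $L$ is tame, it is perfect when $\operatorname{char}L=p>0$ (by \cite[Theorem~3.2]{[K4]}), so $K$ is perfect by relative algebraic closure (and trivially in characteristic zero). Hence $F|K$ is separable, so $F$ and $L$ are linearly disjoint over $K$, giving $[FL:L]=[F:K]$. Now $FL|L$ is a tame finite extension of $L$: defectless, with $p\nmid(v(FL):vL)$ and separable residue extension. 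Combining the fundamental equality for $FL|L$, the fundamental inequality for $F|K$, the identity $[F:K]=[FL:L]$, and the inequality $(vF:vF\cap vL)[Fv:Fv\cap Lv]\le(v(FL):vL)[(FL)v:Lv]$ (forced by the inclusions $vF\hookrightarrow v(FL)$ and $Fv\hookrightarrow(FL)v$), a careful count collapses all inequalities to equalities, yielding simultaneously the fundamental equality for $F|K$, the tameness conditions, and the intersection equalities.

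The three conclusions now follow. That every finite $F|K$ is tame means $(K,v)$ is a tame field, and in particular $Kv$ is perfect. For $Lv=Kv$: any $\bar a\in Lv$ is algebraic over the perfect field $Kv$, hence separable-algebraic, so Hensel's lemma in $L$ applied to a monic lift in $K[X]$ of its minimal polynomial yields a root $a\in L$ with residue $\bar a$; by relative algebraic closure $a\in K$, so $\bar a\in Kv$. For the torsion-freeness of $vL/vK$: given $\alpha\in vL$ with $n\alpha\in vK$, pick $a\in K$ with $v(a)=n\alpha$; then for $F:=K(a^{1/n})\subseteq\tilde K$ we have $\alpha=v(a^{1/n})\in vF$, and the intersection equality gives $\alpha\in vF\cap vL=vK$. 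The main obstacle is the technical claim itself: the simultaneous enforcement of the fundamental equality for $F|K$ and the intersection equalities $vF\cap vL=vK$, $Fv\cap Lv=Kv$ requires exploiting the full rigidity of the defectlessness of $FL|L$ to collapse every natural inequality to an equality.
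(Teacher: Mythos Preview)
The paper does not give its own proof of this lemma; it simply cites \cite[Lemma~3.7]{[K4]}. So your attempt has to stand on its own, and the derivations of $Lv=Kv$ and of the torsion-freeness of $vL/vK$ from your technical claim are fine. The problem is the technical claim itself.

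First, the residue-field half of your inequality $(vF:vF\cap vL)[Fv:Fv\cap Lv]\le(v(FL):vL)[(FL)v:Lv]$ is not ``forced by the inclusions''. From $Fv\subseteq(FL)v$ one cannot deduce $[Fv:Fv\cap Lv]\le[(FL)v:Lv]$ unless $Fv$ and $Lv$ are linearly disjoint over their intersection, which you have not shown; for a model of the failure take $Fv=\Q(\sqrt[3]{2})$ and $Lv=\Q(\omega\sqrt[3]{2})$ with $\omega$ a primitive cube root of unity, where $Fv\cap Lv=\Q$, $[Fv:\Q]=3$, but $[Fv\cdot Lv:Lv]=2$. Second, and more seriously, even granting your inequality the ``careful count'' cannot close. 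Every inequality you have listed points the same way: the fundamental inequality gives $(vF:vK)[Fv:Kv]\le[F:K]$, and your displayed inequality together with $[F:K]=[FL:L]=(v(FL):vL)[(FL)v:Lv]$ gives $(vF:vF\cap vL)[Fv:Fv\cap Lv]\le[F:K]$; since also $(vF:vK)[Fv:Kv]\ge(vF:vF\cap vL)[Fv:Fv\cap Lv]$, you only obtain two upper bounds for the same quantity and no squeeze. What would actually force the collapse is the \emph{opposite} direction, namely $(v(FL):vL)\le(vF:vK)$ and $[(FL)v:Lv]\le[Fv:Kv]$, which amounts to $v(FL)=vF+vL$ and $(FL)v=(Fv)(Lv)$ with linear disjointness over the intersection; none of this is automatic and requires a genuine argument. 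It is also telling that your proof of the technical claim never uses the hypothesis that $Lv|Kv$ is algebraic.
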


We derive:

\begin{corollary}                               \label{td}
Take a tame field $(K,v)$ and an ordered abelian group $\Gamma\subset
vK$ such that $vK/\Gamma$ is torsion free. Then there exists a tame
subfield $(K',v)$ of $(K,v)$ with $vK'=\Gamma$ and $K'v=Kv$.
\end{corollary}
\begin{proof}
%We start by constructing a tame subfield $K'$ of $K$ such that $vK=\Q$
%and $K'v=Kv$.
Denote the prime field of $K$ by $K_0$ and note that $k_0:=K_0v$ is the
prime field of $Kv$. Take a maximal system $\gamma_i$, $i\in I$, of
elements in $\Gamma$ rationally independent over $vK_0\,$. Choose
elements $x_i\in K$ such that $vx_i=\gamma_i\,$, $i\in I$. Further, take
a transcendence basis $t_j\,$, $j\in J$, of $Kv$ over its prime field,
and elements $y_j\in K$ such that $y_j v=t_j$ for all $j\in J$. For
$K_1:=K_0(x_i,y_j\mid i\in I\,,\,j\in J)$ we obtain from
\cite[Lemma~2.2]{[K4]} that $vK_1=vK\oplus \bigoplus_{i\in I}^{}
\gamma_i\Z$ and $K_1v= k_0(t_j\mid j\in J)$, so that $\Gamma/vK_1$ is a
torsion group and $Kv|K_1v$ is algebraic.

Now we take $K'$ to be the relative algebraic closure of $K_1$ in $K$.
Then by Lemma~\ref{trac}, $(K',v)$ is a tame field with $vK/vK'$
torsion free and $K'v=Kv$. Since $\Gamma\subseteq vK$ and $\Gamma/vK_2$
is a torsion group, we have that $\Gamma\subseteq vK'$. Since
$vK/\Gamma$ is torsion free, we also have that $vK'\subseteq \Gamma$, so
that $vK'=\Gamma$.
\end{proof}

\begin{lemma}                               \label{tu}
Take a tame field $(K,v)$ and an ordered abelian group $\Delta$
containing $vK$ such that $\Delta$ is $p$-divisible, where $p$ is the
characteristic exponent of $Kv$. Then there exists a tame extension
field $(L,v)$ of $(K,v)$ with $vL=\Delta$ and $Lv=Kv$.
\end{lemma}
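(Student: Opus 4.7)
The plan is to construct $(L,v)$ in three stages that mirror the structure of $\Delta$ over $vK$. In Stage~1, I would choose a maximal $\Q$-linearly independent set $\{\gamma_i : i \in I\} \subseteq \Delta$ over $vK$, and form the purely transcendental extension $F_0 = K(t_i : i \in I)$ equipped with the Gauss-type valuation determined by $v(t_i) = \gamma_i$. By \cite[Lemma~2.2]{[K4]} (the same tool used in the proof of Lemma~\ref{td}), this gives $vF_0 = vK \oplus \bigoplus_i \gamma_i \Z$ and $F_0 v = Kv$.

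In Stage~2 the task is to enlarge $F_0$ to a tame field $(F,v)$ with residue field $Kv$ and $p$-divisible value group $vF = vK + \bigoplus_i \gamma_i \Z[1/p]$. In equal characteristic $p > 0$, take $F$ to be the henselization of the perfect hull $F_0^{\mathrm{perf}}$; in characteristic zero, adjoin the $p^n$-th roots of each $t_i$ and, in mixed characteristic, sufficiently many further $p$-power roots to render the resulting henselian field $p$-closed in the sense that $a^{1/p} \in F$ for every $a \in F^{\times}$ (this $p$-closedness is needed for tameness in the mixed characteristic case, where it is not automatic from perfect residue and $p$-divisible value group alone). Verifying that $(F,v)$ is tame --- in particular defectless --- is the main technical obstacle. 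I would handle it by embedding $F$ into an ambient maximally complete valued field with value group $vF$ and residue field $Kv$ (a Hahn series $Kv((vF))$ in equal characteristic, or its mixed-characteristic analogue due to Kaplansky and Poonen) and invoking the classical fact that maximally complete valued fields with perfect residue field and $p$-divisible value group are tame, from which tameness of $F$ follows. The mixed-characteristic case is the most delicate.

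In Stage~3, I would extend $(F,v)$ to the desired $(L,v)$ by tame Kummer extensions. By maximality of $\{\gamma_i\}$, the quotient $\Delta/vF$ is torsion; and since $\Delta$ is torsion-free while $vF$ is $p$-divisible, $\Delta/vF$ has no $p$-torsion, so for each $\delta \in \Delta \setminus vF$ the order $n = n_\delta$ of $\delta + vF$ satisfies $\gcd(n,p) = 1$. Pick $a_\delta \in F$ with $v(a_\delta) = n\delta$ and an $n$-th root $b_\delta$ in a fixed algebraic closure; then $X^n - a_\delta$ is irreducible (by the Capelli criterion together with the minimality of $n$), so $F(b_\delta)/F$ has degree $n$, ramification index $n$, and trivial residue extension by the fundamental equality, and is a tame Kummer extension since $\gcd(n,p) = 1$. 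Taking the compositum $L$ of all the $F(b_\delta)$ yields a directed union of finite tame extensions of the tame field $F$, hence a tame algebraic extension of $F$ (and therefore of $K$) with $vL = \Delta$ and $Lv = Kv$, as required.
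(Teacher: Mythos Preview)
Your three-stage construction is more hands-on than the paper's argument, but Stage~2 contains a genuine gap. You assert that embedding $F$ into a maximally complete (Hahn-type) field $M$ with the same value group and residue field, and noting that $M$ is tame, yields that $F$ itself is tame. This implication is false in general: tameness is not inherited by immediate henselian subfields. Concretely, a henselian field $F$ with $p$-divisible value group and perfect residue field may well admit a proper immediate algebraic extension (a defect extension), in which case $F$ is not algebraically maximal and hence not tame, even though its maximal immediate extension $M$ is. Your construction of $F$ as the henselization of a perfect hull (or its mixed-characteristic analogue) gives no control over defect, so the tameness claim is unjustified. The aside that $p$-closedness is ``needed for tameness'' in mixed characteristic is also off: by \cite[Theorem~3.2]{[K4]} the relevant conditions are algebraic maximality together with $p$-divisible value group and perfect residue field, and closure under $p$-th roots neither appears in nor implies that characterization.

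The fix is easy and essentially collapses your argument to the paper's: rather than stopping at the henselization, pass to a maximal immediate algebraic extension (or, equivalently, to the relative algebraic closure of $F$ inside your Hahn field $M$, which is tame by Lemma~\ref{trac}). The paper does exactly this, but in a single stroke: it invokes \cite[Theorem~2.14]{[K2]} to produce some extension $(K_1,v)$ of $(K,v)$ with $vK_1=\Delta$ and $K_1v=Kv$ directly, then takes $(L,v)$ to be a maximal immediate algebraic extension of $(K_1,v)$. Since $(L,v)$ is then algebraically maximal with $p$-divisible value group $\Delta$ and perfect residue field $Kv$, \cite[Theorem~3.2]{[K4]} gives tameness immediately. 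Your Stages~1 and~3 are thus subsumed by the black-box \cite[Theorem~2.14]{[K2]}, and the delicate Stage~2 is replaced by the one-line passage to a maximal immediate algebraic extension.
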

\begin{proof}
By Theorem~2.14 of \cite{[K2]} there is an extension $(K_1,v)$ of
$(K,v)$ such that $vK_1=\Delta$ and $K_1v=Kv$. We take $(L,v)$ to be a
maximal immediate algebraic extension of $(K_1,v)$; then $(L,v)$ is
algebraically maximal. Since $vL=vK_1=\Delta$ is $p$-divisible, and
$Lv=K_1v=Kv$ is perfect by \cite[Theorem~3.2]{[K4]} applied to $(K,v)$,
it follows from the same theorem that $(L,v)$ is a tame field.
\end{proof}

Now we can give the
\n
{\bf Proof of Theorem~\ref{triangle}:} \ Since $\Gamma\prec vK$ by assumption,
we have that $vK/\Gamma$ is torsion free. Hence by Corollary~\ref{td}
we find a tame subfield $(K',v)$ of $(K,v)$ with $vK'=\Gamma$ and
$K'v=Kv$. Again since $\Gamma\prec vK$, it follows from Theorem~\ref{AKE} that $(K',v) \prec (K,v)$.

Since $(K',v)$ is a tame field, we know that $\Gamma=vK'$ is $p$-divisible. As $\Gamma\prec\Delta$, the same
holds for $\Delta$. Hence by Lemma~\ref{tu} we can find a tame extension field $(L,v)$ of $(K',v)$
with $vL=\Delta$ and $Lv=K'v$. Since $vK'=\Gamma\prec\Delta=vL$, it follows again from Theorem~\ref{AKE} that
$(K',v) \prec (L,v)$.                                                   \qed

\pars
Theorem~\ref{triangle} is the key to the
\n
{\bf Proof of Theorem~\ref{gen}:} \
In view of Theorems~1.2 and~4.1 of \cite{[AKP]}, we only have to show that if
$(K,v)$ is algebraically complete with divisible value group and large perfect residue
field, then $(K,v)$ is extremal. Note that $(K,v)$ is then a tame field,
being algebraically complete with perfect residue field and $p$-divisible value group.

Every trivially valued field is
extremal, so we may assume that $(K,v)$ is nontrivially valued. We apply
Corollary~\ref{tfeqtf} with $\Delta=\R$ to obtain a tame field
$(L,v)\equiv (K,v)$ with value group $vL=\R$. By the proof of
Theorem~1.2 in \cite{[AKP]}, this field is extremal. Since extremality
is an elementary property, also $(K,v)$ is extremal.           \qed

\parm
We turn to the
\n
{\bf Proof of Theorem~\ref{tmcne}:} \ We extend the $p$-adic valuation
$v_p$ of $\Q$ to some valuation $v$ on the algebraic closure of $\Q$.
Adjoining a primitive $p$-th root of unity $\zeta_p$ to $\Q$ and passing
to the henselization $K:=\Q(\zeta_p)^h=\Q^h(\zeta_p)$, we obtain that
$vK=\frac{1}{p-1} (v_p p)\Z$ and $Kv=\Q v_p=\Fp$.

By general ramification theory, the
Galois extension $\F_{p^p}|\Fp$ can be lifted to a Galois extension of degree $p$ of
$K$. Since $K$ contains the $p$-th roots of unity, Kummer theory shows that this extension is generated
by an arbitrary $p$-th root of some element $b\in K$.
%Since $K$ lies in the completion of $\Q(\zeta_p)$, we can approximate $b'$ arbitrarily well by an element
%$b\in \Q(\zeta_p)$, and Krasner's Lemma then shows that any $p$-th root of $b$ generates the same Galois
%extension of $K$.

Now we take $L_0$ (respectively, $F_0$) to be the Galois extension of $K$ generated by a $p$-th root of $bp$ (resp., of $p$). Then $\frac{1}{p(p-1)} (v_p p)\Z\subseteq vL_0$ and $\frac{1}{p(p-1)} (v_p p)\Z\subseteq vF_0$, and since \[
[L_0:K]\>=\>[F_0:K]\>=\>p\>=\>\left(\frac{1}{p(p-1)} (v_p p)\Z\,:\,vK\right)\>,
\]
the fundamental inequality $n\geq ef$ shows that
\[
vF_0\>=\>vL_0\>=\>\frac{1}{p(p-1)} (v_p p)\Z \;\;\mbox{ and }\;\; L_0v\>=\>F_0v\>=\>\F_p\>.
\]

Since both $(L_0,v)$ and $(F_0,v)$ are henselian fields of characteristic 0 with value group isomorphic to $\Z$, they are algebraically complete. Hence by \cite[Theorem~4.1]{[AKP]}, both fields are extremal.

\parm
Next, in order to construct $(L_1,v)$ and $(F_1,v)$, we choose algebraic extensions $(L'_1,v)$ of $(L_0,v)$ and $(F'_1,v)$ of $(F_0,v)$ such that $vL'_1=vF'_1$ is the $p$-divisible hull of
$vL_0=vF_0$ and hence of $\frac{1}{p-1} (v_p p)\Z$, and
$L'_1v =L_0v=\Fp=F_0v=F'_1v$; this is possible by \cite[Theorem 2.14]{[K2]}.

Now we take $(L_1,v)$ (resp., $(F_1,v)$) to be a maximal immediate algebraic extension of
$(L'_1,v)$ (resp., $(F'_1,v)$). Then by \cite[Theorem 3.2]{[K4]}, $(L_1,v)$ and $(F_1,v)$ are
tame fields. Their value groups and residue fields are as in the assertion of
part b) of Theorem~\ref{tmcne}.

\parm
Finally, in order to construct $(L_2,v)$ and $(F_2,v)$, we choose an arbitrary non-trivially valued henselian and
perfect field $(k,w)$ of characteristic $p$ such that $kw=\F_p\,$. (For example, we could take the power series field $\F_p((t^{\Q}))$
for $k$ and the $t$-adic valuation for $w$; but also the much smaller relative algebraic closure of $\F_p(t)$ in
$\F_p((t^{\Q}))$ works.) Using \cite[Theorem 2.14]{[K2]} again, we construct extensions $(L'_2,v)$ of $(L_1,v)$
and $(F'_2,v)$ of $(F_1,v)$ such that $vL'_2=vF'_2=\Q$ and $L'_2 v=F'_2 v=k$. As before, we take $(L_2,v)$
(resp., $(F_2,v)$) to be a maximal immediate algebraic extension of $(L'_2,v)$ (resp., $(F'_2,v)$). Then again by
\cite[Theorem 3.2]{[K4]}, $(L_2,v)$ and $(F_2,v)$ are tame fields. Since their residue field $k$ admits a nontrivial
henselian valuation, it is a large field. Hence by Theorem~\ref{gen}, $(L_2,v)$ and $(F_2,v)$ are also extremal.

\parm
It remains to show that $(L_i,v)$ and $(F_i,v)$ are not elementarily
equivalent, for $i=1,2,3$. Assume the contrary. Then $L_0$ and $F_0$ or $L_1$ and $F_1$
would be isomorphic over $\Q$, as all of them are algebraic over $\Q$. Likewise, if $(L_2,v)$ and $(F_2,v)$
are elementarily equivalent then we obtain an isomorphism of the algebraic
parts of $L_2$ and $F_2$ over $\Q$. In all three cases, this yields an embedding of $F_0$ in $L_2$ and hence
the existence of all $p$-th roots of $p$ in $L_2\,$. But $L_2$ also contains a $p$-th root
of $bp$, hence a $p$-th root of $b$ as well. This however contradicts the fact that by
construction, $(L_2v)w$ does not contain $\F_{p^p}\,$.            \qed

\parm
We conclude this section with the \n
{\bf Proof of Proposition~\ref{compimpr}:} \ In both directions we assume that Kv is perfect.

First we assume that $(K,v)$
is extremal and $vK$ is divisible. By the compatibility of ``divisible value group''
with composition, both $wK$ and $\bar{w}(Kw)$ are divisible. Theorem~\ref{gen} shows that
$(K,v)$ is algebraically complete and that $Kv=(Kw)\bar{w}$ is large.
By the compatibility of ``algebraically complete''
with composition, both $(K,w)$ and $(Kw,\bar{w})$ are algebraically complete. The latter
has a large perfect residue field, hence by Theorem~\ref{gen}, it is extremal.
As in addition its value group is divisible and its residue field is perfect, it is itself perfect.
Since $Kw$ carries the nontrivial henselian valuation $\bar{w}$, it is large
(see e.g.\ \cite[Proposition 16]{[K2a]}). Therefore, also
$(K,w)$ has a large perfect residue field, and again it follows from Theorem~\ref{gen}
that it is extremal.

For the converse, we assume that both $(K,w)$ and $(Kw,\bar{w})$ are extremal with divisible
value group. By Theorem~\ref{gen}, both are algebraically complete, with large residue fields.
By compatibility it follows that $(K,v)$ is algebraically complete with divisible value group.
We know that $Kv=(Kw)\bar{w}$ is large, and it is also perfect by assumption. Now
Theorem~\ref{gen} shows that $(K,v)$ is extremal.

%
%???????????????????????????????????????????????????????????????????????
%
\section{Additive polynomials over extremal fields}\label{sectoap}
We start by introducing a more precise notion of extremality. Take a
valued field $(K,v)$, a subset $S$ of $K$, and a polynomial $f$ in $n$
variables over $K$. Then we say that \bfind{$(K,v)$ is $S$-extremal
with respect to $f$} if the set $vf(S^n)\subseteq vK\cup\{\infty\}$ has
a maximum. We say that \bfind{$(K,v)$ is $S$-extremal} if it is
$S$-extremal with respect to every polynomial in any finite number of
variables. With this notation, $(K,v)$ being extremal means that it is
${\cal O}$-extremal, where ${\cal O}$ denotes the valuation ring of
$(K,v)$.

A subset $A$ of a valued field $(K,v)$ has the {\bf optimal
approximation property} if for every $z\in K$ there is some $y\in A$
such that $v(z-y)= \max\{v(z-x)\mid x\in A\}$. A polynomial $h\in
K[X_1,\ldots,X_n]$ is called a \bfind{$p$-polynomial} if it is of the
form $f+c$, where $f\in K[X_1,\ldots,X_n]$ is an additive polynomial and
$c\in K$. The proof of the following observation is straightforward:

\begin{lemma}                               \label{oapextr}
The images of all additive polynomials over $(K,v)$ have the optimal
approximation property if and only if $K$ is $K$-extremal with respect
to all $p$-polynomials over $K$.
\end{lemma}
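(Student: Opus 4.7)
The plan is to unpack the two notions side-by-side and observe that a $p$-polynomial $h=f+c$ is, up to a sign, a ``distance function'' from the target value $-c$ to the image set $f(K^n)$. Concretely, for every tuple $a\in K^n$,
\[
vh(a)\>=\>v(f(a)+c)\>=\>v(f(a)-z)\quad\mbox{where}\quad z:=-c\>,
\]
so that
\[
\{vh(a)\mid a\in K^n\}\>=\>\{v(y-z)\mid y\in f(K^n)\}\>\subseteq\>vK\cup\{\infty\}\>.
\]
This identity is the entire content of the lemma; the rest is just interpreting a maximum on the left as a maximum on the right, and vice versa.

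For the forward direction, assume that the images of all additive polynomials over $(K,v)$ have the optimal approximation property. Take any $p$-polynomial $h$ over $K$ and write $h=f+c$ with $f$ additive and $c\in K$. Set $z:=-c$. By hypothesis applied to $S:=f(K^n)$ and to this $z$, there is some $y^*\in f(K^n)$, say $y^*=f(a^*)$, with $v(z-y^*)=\max\{v(z-y)\mid y\in f(K^n)\}$ (with the convention that this maximum is $\infty$ if $z\in f(K^n)$). By the displayed identity, $vh(a^*)$ is then the maximum of $vh(K^n)$, so $K$ is $K$-extremal with respect to~$h$.

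For the converse, assume $K$-extremality with respect to every $p$-polynomial. Let $f$ be an additive polynomial in $n$ variables over $K$ and let $z\in K$; we must produce $y\in f(K^n)$ with $v(z-y)$ maximal. Consider the $p$-polynomial $h:=f-z$ (so $c=-z$). By hypothesis, $vh(K^n)$ has a maximum, attained at some $a^*\in K^n$. Setting $y^*:=f(a^*)\in f(K^n)$, the identity above gives $v(z-y^*)=vh(a^*)=\max\{v(z-y)\mid y\in f(K^n)\}$, as required. If this maximum is $\infty$ then $z=y^*\in f(K^n)$ and the optimal approximation property is witnessed trivially by $y=z$.

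There is no real obstacle here; the only thing to watch is that ``maximum'' is taken in $vK\cup\{\infty\}$ so that the degenerate case $z\in f(K^n)$ is handled uniformly, and that the correspondence $(f,z)\leftrightarrow f-z$ is a bijection between pairs (additive polynomial in $n$ variables, element of $K$) and $p$-polynomials in $n$ variables, so that quantifying over all $p$-polynomials matches quantifying over all additive polynomials together with all $z\in K$.
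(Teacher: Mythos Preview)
Your argument is correct and is precisely the ``straightforward'' unpacking the paper has in mind (the paper in fact omits the proof entirely): the bijection $(f,z)\leftrightarrow f-z$ between pairs and $p$-polynomials turns the maximum in the definition of $K$-extremality for $h=f-z$ into the maximum in the optimal approximation property for $f(K^n)$ at $z$, and conversely.
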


We will work with ultrametric balls
\[
B_\alpha(a)\>:=\>\{b\in K\mid v(a-b)\geq\alpha\}\>,
\]
where $\alpha\in vK$ and $a\in K$. Observe that ${\cal O}=B_0(0)$. We
note:

\begin{proposition}                         \label{BaBb}
Take $\alpha,\beta\in vK$ and $a,b\in K$. Then $(K,v)$ is
$B_\alpha(a)$-extremal if and only if it is $B_\beta(b)$-extremal. In
particular, $(K,v)$ is $B_\alpha(a)$-extremal if and only if it is
extremal.
\end{proposition}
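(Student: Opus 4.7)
The plan is to exhibit an affine bijection between any two ultrametric balls and use it to translate polynomial-image sets from one ball to the other. Concretely, given $\alpha,\beta\in vK$ and $a,b\in K$, pick any $c\in K$ with $vc=\beta-\alpha$ (possible because $vK$ is a group, so $\beta-\alpha\in vK$), and define $\varphi:K\to K$ by $\varphi(x)=c(x-a)+b$. The first step is the easy verification that $x\in B_\alpha(a)$ iff $v(c(x-a))\geq \beta$ iff $\varphi(x)\in B_\beta(b)$; since $\varphi$ is an invertible affine map on $K$ with inverse $\varphi^{-1}(y)=c^{-1}(y-b)+a$, it restricts to a bijection $B_\alpha(a)\to B_\beta(b)$.

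Next, I would transport polynomials along $\varphi$. For $f(X_1,\dots,X_n)\in K[X_1,\dots,X_n]$, define
\[
g(Y_1,\dots,Y_n)\>:=\>f\bigl(\varphi^{-1}(Y_1),\dots,\varphi^{-1}(Y_n)\bigr),
\]
which is again a polynomial over $K$ because $\varphi^{-1}$ is affine. By construction, for any $(y_1,\dots,y_n)\in B_\beta(b)^n$ the tuple $(x_1,\dots,x_n):=(\varphi^{-1}(y_1),\dots,\varphi^{-1}(y_n))$ lies in $B_\alpha(a)^n$ and satisfies $f(x_1,\dots,x_n)=g(y_1,\dots,y_n)$. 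Hence
\[
\{vf(x_1,\dots,x_n)\mid x_i\in B_\alpha(a)\}\>=\>
\{vg(y_1,\dots,y_n)\mid y_i\in B_\beta(b)\}
\]
as subsets of $vK\cup\{\infty\}$, so one set has a maximum iff the other does.

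Finally, I would note that the correspondence $f\mapsto g$ is a bijection on $K[X_1,\dots,X_n]$ (its inverse is $g\mapsto g\circ(\varphi,\dots,\varphi)$), so as $f$ ranges over all polynomials over $K$, so does $g$. Thus $(K,v)$ is $B_\alpha(a)$-extremal with respect to every polynomial iff it is $B_\beta(b)$-extremal with respect to every polynomial, giving the first statement. For the ``in particular'' part, take $\beta=0$ and $b=0$, so that $B_\beta(b)={\cal O}$ and $B_\beta(b)$-extremality is exactly extremality.

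There is no real obstacle here; the only point requiring a small remark is that $c$ with $vc=\beta-\alpha$ exists in $K$, which follows from $vK$ being a group, and that $\varphi$ and $\varphi^{-1}$ are affine so that the substitution produces an honest polynomial over $K$. Everything else is a direct transport of the definition of $S$-extremality along this bijection.
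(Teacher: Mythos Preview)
Your proof is correct and follows essentially the same approach as the paper: both pick $c$ with $vc=\beta-\alpha$, use the affine bijection $x\mapsto c(x-a)+b$ between $B_\alpha(a)$ and $B_\beta(b)$, and substitute it into an arbitrary polynomial to transport the image sets. The only cosmetic differences are that the paper composes with $\varphi$ rather than $\varphi^{-1}$ and proves just one implication (the other following by symmetry), whereas you make the bijection $f\mapsto g$ on polynomials explicit to cover both directions at once.
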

\begin{proof}
It suffices to prove that ``$B_\alpha(a)$-extremal'' implies
``$B_\beta(b)$-extremal''. Take a polynomial $f$ in $n$ variables. If
$c\in K$ is such that $vc=\beta-\alpha$, then the function $y\mapsto
c(y-a)+b$ establishes a bijection from $B_\alpha(a)$ onto $B_\beta(b)$.
We set $g(y_1,\ldots,y_n):=f(c(y_1-a)+b,\ldots,c(y_n-a)+b)$. It follows
that $f(B_\beta(b)^n)=g(B_\alpha(a)^n)$, whence $vf(B_\beta(b)^n)=
vg(B_\alpha(a))^n$. Hence if $(K,v)$ is $B_\alpha(a)$-extremal with
respect to $g$, then it is $B_\beta(b)$-extremal with respect to $f$.
This yields the assertions of the proposition.
\end{proof}

A valued field $(K,v)$ of characteristic $p>0$ is called
\bfind{inseparably defectless} if every finite purely inseparable
extension $(L|K,v)$ satisfies equation~(\ref{dle}) (note that the
extension of $v$ from $K$ to $L$ is unique). This holds if and only if
every finite subextension of $(K|K^p,v)$ satisfies equation~(\ref{dle}).

If $(K,v)$ is inseparably defectless with $[K:K^p]<\infty$, then for
every $\nu\geq 1$, the extension $(K|K^{p^{\nu}},v)$ has a
\bfind{valuation basis}, that is, a basis of elements $b_1,\ldots,b_\ell$
that are \bfind{valuation independent} over $K^{p^{\nu}}$, i.e.,
\[
v(c_1b_1+\ldots+c_\ell b_\ell)\>=\>\min_{1\leq i\leq\ell} vc_ib_i
\]
for all $c_1,\ldots,c_\ell\in K^{p^{\nu}}$.

Note that every algebraically complete valued field is in particular
inseparably defectless. By Theorem~\ref{gen}, every extremal field is
algebraically complete and hence inseparably defectless.

\begin{proposition}                               \label{gi}
Take an inseparably defectless valued field $(K,v)$ with $[K:K^p]<
\infty$ and an additive polynomial $f$ in $n$ variables over $K$. Then
for some integer $\nu\geq 0$ there are additive polynomials
$g_1,\ldots,g_m\in K[X]$ in one variable such that
\sn
a) \ $f(K^n)=g_1(K) +\ldots+g_m(K)$,\n
b) \ all polynomials $g_i$ have the same degree $p^{\nu}$,\n
c) \ the leading coefficients $b_1,\ldots,b_m$ of $g_1,\ldots,g_m$ are
valuation independent over $K^{p^{\nu}}$.
\end{proposition}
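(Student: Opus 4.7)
The plan is to exploit the additivity of $f$ together with the structural consequences of $[K:K^p]<\infty$ and inseparable defectlessness---namely, the existence of valuation-independent bases for finite-dimensional $K^{p^\nu}$-subspaces of $K$---proceeding by induction on the maximum Frobenius exponent of $f$. First, since $f$ is additive, it splits across its variables as $f(X_1,\dots,X_n)=\sum_{i=1}^n f_i(X_i)$ with $f_i(X)=\sum_j a_{ij}X^{p^j}$ a one-variable additive polynomial; hence $f(K^n)=\sum_i f_i(K)$. Set $\nu=\max\{j:a_{ij}\neq 0\}$ and $I=\{i:a_{i\nu}\neq 0\}$.

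Next I extract the degree-$p^\nu$ part. The leading coefficients $\{a_{i\nu}\}_{i\in I}$ span a finite-dimensional $K^{p^\nu}$-subspace $V\subseteq K$, which admits a valuation-independent basis $b_1,\dots,b_m$ over $K^{p^\nu}$ by the standing hypothesis. Writing $a_{i\nu}=\sum_k\alpha_{ik}^{p^\nu}b_k$ with $\alpha_{ik}\in K$ and invoking the characteristic-$p$ identity $(c+d)^{p^\nu}=c^{p^\nu}+d^{p^\nu}$, one gets
\[
\sum_{i\in I}a_{i\nu}X_i^{p^\nu}\;=\;\sum_{k=1}^m b_k\,Y_k^{p^\nu},\qquad Y_k:=\sum_{i\in I}\alpha_{ik}X_i.
\]
Completing $\{Y_k\}$ to a $K$-linear change of variables (appending the $X_i$'s for $i\notin I$ and complementary $Z_l$'s) recasts $f$ as $f=\sum_k g_k(Y_k)+h$, where $g_k(Y)=b_k Y^{p^\nu}+\tilde h_k(Y)$ is a one-variable additive polynomial of degree $p^\nu$ with leading coefficient $b_k$, and $h$ is additive of maximum degree $<p^\nu$ in the complementary variables. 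Consequently, $f(K^n)=\sum_k g_k(K)+h(K^{n-m})$.

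The remaining and main step is to absorb the residual $h(K^{n-m})$ into the degree-$p^\nu$ framework. By the inductive hypothesis applied to $h$, one obtains $h(K^{n-m})=\sum_{k'}g'_{k'}(K)$ with the $g'_{k'}$'s of common degree $p^{\nu'}<p^\nu$ and valuation-independent leading coefficients $b'_{k'}$ over $K^{p^{\nu'}}$. To unify at degree $p^\nu$, fix a valuation-independent basis $u_1,\dots,u_s$ of $K^{p^{\nu'}}$ over $K^{p^\nu}$ (available again by $[K:K^p]<\infty$ and inseparable defectlessness), rewrite each monomial leading contribution as $b'_{k'}K^{p^{\nu'}}=\sum_l b'_{k'}u_l K^{p^\nu}$, and combine all the leading coefficients by extracting a valuation-independent basis of the combined $K^{p^\nu}$-span.

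The main technical obstacle I expect is the \emph{coupling} between the leading and lower-order terms of each $g'_{k'}$: when $g'_{k'}$ is not a monomial, $g'_{k'}(K)$ is not of the form $b'_{k'}K^{p^{\nu'}}$, so the simple monomial lift above does not directly extend. Resolving this will likely require strengthening the inductive statement to control the form of the $g_k$'s, or an iterated absorption argument in which the lower-order coefficients $\tilde h_k$ of the degree-$p^\nu$ polynomials are adjusted to accommodate the coupled contributions from $h$, while preserving both the image equality and the valuation-independence of the leading coefficients.
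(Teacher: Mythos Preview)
Your outline has the right ingredients, but the gap you yourself flag in the merging step is genuine, and it is not resolved by the sketch you give. The difficulty is not the coupling per se but the form of your lift: you try to raise only the leading monomial of $g'_{k'}$ from degree $p^{\nu'}$ to $p^{\nu}$ via $b'_{k'}K^{p^{\nu'}}=\sum_l b'_{k'}u_l K^{p^{\nu}}$, and this of course breaks as soon as $g'_{k'}$ has lower terms. The device that works lifts the \emph{entire} polynomial at once: take a basis $u_1,\ldots,u_s$ of $K$ over $K^{p^{\nu-\nu'}}$ and set $\hat g'_{k',l}(Y):=g'_{k'}\bigl(u_l\,Y^{p^{\nu-\nu'}}\bigr)$. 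Since $K=\bigoplus_l u_l K^{p^{\nu-\nu'}}$ and $g'_{k'}$ is additive, one has $g'_{k'}(K)=\sum_l \hat g'_{k',l}(K)$; each $\hat g'_{k',l}$ is additive of degree exactly $p^{\nu}$, with leading coefficient $b'_{k'}u_l^{p^{\nu'}}$. If the $u_l$ are chosen valuation independent over $K^{p^{\nu-\nu'}}$ (which is where inseparable defectlessness enters), then the family $\{b'_{k'}u_l^{p^{\nu'}}\}_{k',l}$ is valuation independent over $K^{p^{\nu}}$. This dissolves the obstacle you identified without any need to ``strengthen the inductive statement''. One still has to combine these lifted polynomials with your $g_k$'s, and the union of leading coefficients need not be valuation independent over $K^{p^{\nu}}$; a further absorption then produces new residuals of lower degree, so the argument becomes an iterated lift--and--absorb process whose termination must be argued.

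The paper itself does not spell any of this out: its entire proof is a reference to Lemma~4 of \cite{[DK]}, with the single remark that the explicit basis $1,t,\ldots,t^{\delta_i-1}$ of $\F_q((t))$ over $\F_q((t))^{\delta_i}$ used there should be replaced by an arbitrary basis of $K$ over $K^{\delta_i}$ --- whose existence is precisely what ``inseparably defectless with $[K:K^p]<\infty$'' provides. So your strategy is compatible in spirit with the intended argument, but as written it is not a proof; you would need either to carry out the iteration above with a termination measure, or to follow the organization of the induction in \cite{[DK]} directly.
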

\begin{proof}
The proof can be taken over almost literally from Lemma~4 of
\cite{[DK]}.
One only has to replace the elements $1,t,\ldots,t^{\delta_i-1}$ from
that proof by an arbitrary basis of $K|K^{\delta_i}$.
\end{proof}

The following theorem is a reformulation of Theorem~\ref{OAP} of the
Introduction.
\begin{theorem}                            % \label{OAPr}
Assume that $(K,v)$ is an extremal field of characteristic $p>0$ with
$[K:K^p]<\infty$. Then it is $K$-extremal w.r.t.\ all $p$-polynomials
and therefore, the images of all additive polynomials have the optimal
approximation property.
\end{theorem}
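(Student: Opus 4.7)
The plan is to show that $(K,v)$ is $K$-extremal with respect to all $p$-polynomials; combined with Lemma~\ref{oapextr}, this gives the conclusion. Fix a $p$-polynomial $h(X_1,\ldots,X_n) = f(X_1,\ldots,X_n) + c$ with $f$ additive. The case $c = 0$ is trivial since $h$ vanishes at the origin, so assume $c \neq 0$. By Proposition~\ref{gi} we may write $f(K^n) = g_1(K) + \ldots + g_m(K)$ where $g_i(Y) = b_i Y^{p^\nu} + r_i(Y)$ are one-variable additive polynomials of common degree $p^\nu$ whose leading coefficients $b_1,\ldots,b_m$ are valuation independent over $K^{p^\nu}$. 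Setting $\tilde h(Y_1,\ldots,Y_m) := \sum_i g_i(Y_i) + c$, one has $vh(K^n) = v\tilde h(K^m)$, so it suffices to show this set has a maximum.

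Since $(K,v)$ is extremal, Proposition~\ref{BaBb} tells us it is $B_\alpha(a)$-extremal for every ball and every number of variables. The strategy is therefore to find a threshold $\Theta\in vK$ large enough that $\min_i vy_i < -\Theta$ already forces $v\tilde h(y) < vc$: because $\tilde h(0,\ldots,0)=c$ attains the value $vc$ at the centre of $B_{-\Theta}(0)^m$, the supremum of $v\tilde h$ over $K^m$ then coincides with the supremum over $B_{-\Theta}(0)^m$, and that maximum is attained by Proposition~\ref{BaBb}.

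The boundedness claim rests on the valuation-independence of the leading coefficients. First choose $\Theta_0 \in vK$ large enough that $vy \le -\Theta_0$ makes the leading monomial of every $g_i(y)$ dominate, so that $v g_i(y) = vb_i + p^\nu vy$ and $v r_i(y) > v(b_iy^{p^\nu})$. Split indices as $S = \{i : vy_i \le -\Theta_0\}$ and its complement $T$. Since $y_i^{p^\nu}\in K^{p^\nu}$, valuation-independence of the $b_i$ over $K^{p^\nu}$ gives
\[
v\Big(\sum_{i\in S} b_i y_i^{p^\nu}\Big) \;=\; \min_{i\in S}(vb_i + p^\nu vy_i) \;=:\; \mu_S,
\]
and adding the $r_i(y_i)$'s (each of strictly larger valuation than its leading counterpart for $i\in S$) keeps the valuation of the partial sum at $\mu_S$. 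Meanwhile $v(\sum_{i\in T} g_i(y_i))$ admits a lower bound $L$ depending only on $\Theta_0$ and the coefficients of the $g_i$'s. A second choice of $\Theta \ge \Theta_0$, large in terms of the $vb_i$, of $vc$, and of $L$, then guarantees that as soon as some $vy_i \le -\Theta$, the dominant contribution $\mu_S$ drops strictly below both $L$ and $vc$, forcing $v(\sum_i g_i(y_i)) = \mu_S$ and therefore $v\tilde h(y) = \mu_S < vc$, as required.

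The only delicate point is this boundedness claim, and the decisive ingredient is the valuation-independence of $b_1,\ldots,b_m$ over $K^{p^\nu}$: without it the combination $\sum_i b_i y_i^{p^\nu}$ could suffer unexpected cancellation, and the entire argument would collapse. That independence is precisely what Proposition~\ref{gi} extracts from the hypotheses $[K:K^p]<\infty$ and inseparable defectlessness, the latter being automatic for extremal fields by Theorem~\ref{gen}. The remaining ingredients — extremality transferred to balls via Proposition~\ref{BaBb} and the decomposition of $f$ — are off-the-shelf, so the bulk of the work lies in carefully choosing $\Theta_0$ and $\Theta$ to make the two valuation estimates above go through simultaneously.
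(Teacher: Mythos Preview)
Your proposal is correct and follows essentially the same route as the paper: reduce via Proposition~\ref{gi} to the auxiliary $p$-polynomial $\tilde h(Y_1,\ldots,Y_m)=\sum_i g_i(Y_i)+c$, exploit the valuation independence of the leading coefficients $b_i$ to show that outside a suitable ball the values of $\tilde h$ are small, and then invoke Proposition~\ref{BaBb}. The only cosmetic difference is that the paper works with a single threshold $\alpha$ and proves the stronger comparison $v\tilde h(B_\alpha(0)^m)>v\tilde h(K^m\setminus B_\alpha(0)^m)$, whereas you use a two-step choice $\Theta_0\le\Theta$ and compare the outside values only against the single inside value $vc=v\tilde h(0)$; your splitting into $S$ and $T$ makes the handling of ``mixed'' tuples (some coordinates inside, some outside) a bit more transparent, but the underlying mechanism is identical.
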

\begin{proof}
Take a $p$-polynomial $h$ in $n$ variables over $K$, and write it as
$h=f+c$ with $f$ an additive polynomial in $n$ variables over $K$ and
$c\in K$. We choose additive polynomials $g_1,\ldots,g_m\in K[X]$ in one
variable satisfying assertions a), b), c) of Proposition~\ref{gi}. Then
$h(K^n)=g_1(K)+\ldots+g_m(K)+c$.

We write $g_i=b_iX^{p^{\nu}}+c_{i,\nu-1}X^{p^{\nu-1}}+\ldots+c_{i,0}X$
for $1\leq i\leq m$. Then we choose $\alpha\in vK$ such that
%$\alpha<0$, $\alpha<vc$ and
\[
\alpha\><\>\min\{0,\,vc-vb_i\, ,\,vc_{i,k}-vb_i\mid 1\leq i\leq m\,,\,
0\leq k<\nu\}\>.
\]
Because $\alpha<0$, it then follows that for each $a$
with $va\leq\alpha$,
\[
vb_i+p^{\nu}va \>\leq\> vb_i+p^{\nu}\alpha \>\leq\> vb_i+\alpha \><\>vc
\]
and for $0\leq k<\nu$,
\[
vb_i+p^{\nu}va \>\leq\> vb_i+va+p^k va \>\leq\> vb_i+\alpha
+p^k va\><\> vc_{i,k}+p^k va\>.
\]
%\begin{eqnarray*}
%vb_i+p^{\nu}\alpha & < & vb_i+\alpha \><\>vc\\
%\min\{0,\,vc+(p^{\nu}-1)\alpha\, ,\,vc_{i,k}+(p^{\nu}-1)\alpha
%\mid 1\leq i\leq m\,,\, 0\leq k<\nu\} \\
% & \leq & \min\{0,\,vc\, ,\,vc_{i,k}+p^{k}\alpha
%\mid 1\leq i\leq m\,,\, 0\leq k<\nu\}
%\end{eqnarray*}
%since $\alpha<0$. If $va\leq\alpha$, this yields that
%
It then follows that
\begin{equation}                            \label{a}
vg_i(a)\>=\>vb_i+p^{\nu}va\>\leq\>vb_i+p^{\nu}\alpha \><\>vc\>.
\end{equation}
On the other hand, if $va'\geq\alpha$, then $vb_i+p^{\nu}va'\>\geq\>
vb_i+p^{\nu}\alpha$ and $vc_{i,k}+p^kva' \>\geq\> vc_{i,k}+p^k\alpha \>
>\>vb_i+p^{\nu}\alpha$ for $0\leq k<\nu$. This yields that
\begin{equation}                            \label{a'}
vg_i(a')\>\geq\>vb_i+p^{\nu}\alpha\>.
\end{equation}

Now take any $(a'_1,\ldots,a'_m)\in B_\alpha(0)^n$ and
$(a_1,\ldots,a_m)\in K^n\setminus B_\alpha(0)^n$. So we have:
\[
\min\{va_1,\ldots,va_m\}\><\>\alpha\>\leq\>\min\{va'_1,\ldots,va'_m\}\>.
\]
Since $b_1,\ldots,b_m$ are valuation independent over $K^{p^{\nu}}$, we
then obtain from (\ref{a}) and (\ref{a'}) that
\begin{eqnarray*}
vh(a_1,\ldots,a_m) & = & \min_{1\leq i\leq m} vb_i+p^{\nu}va_i\\
& < & \min_{1\leq i\leq m}vb_i+p^{\nu}\alpha\>\leq\>
vh(a'_1,\ldots,a'_m)\>.
\end{eqnarray*}
%
%On the other hand, if $a'_1,\ldots,a'_m\in B_\alpha(0)$, then
%
%$\min\{va_1,\ldots,va_m\}<\min\{va'_1,\ldots,va'_m\}$ implies
%\begin{eqnarray*}
%vh(a'_1,\ldots,a'_m) & \geq & \min\{vc,vb_i+p^{\nu}va_i,
%vc_{i,k}+p^{k} va_i\mid 1\leq i\leq m\,,\, 0\leq k<\nu\}\\
%& \geq & \min\{vc,vb_i+p^{\nu}\alpha,
%vc_{i,k}+p^{k} \alpha\mid 1\leq i\leq m\,,\, 0\leq k<\nu\}\\
%& = & \min_{1\leq i\leq m}vb_i+p^{\nu}\alpha\\
%
%\min_{1\leq i\leq m}va'_i+p^{\nu}va'_i\\
% & \geq & \min_{1\leq i\leq m}vb_i+p^{\nu}va_i\>=\>vh(a_1,\ldots,a_m)\>.
%\end{eqnarray*}
%
This proves that
\[
vh(B_\alpha(0)^n)\> >\>vh(K^n\setminus B_\alpha(0)^n)\>.
\]
Since $(K,v)$ is extremal by assumption, Proposition~\ref{BaBb} shows
that $vh(B_\alpha(0)^n)$ has a maximal element, and the same is
consequently true for $vh(K^n)$. This shows that $(K,v)$ is
$K$-extremal w.r.t.\ $h$, from which the first assertion follows. The
second assertion follows by Lemma~\ref{oapextr}.
\end{proof}

%
%???????????????????????????????????????????????????????????????????????
%
\section{More constructions of extremal fields, and proof of
Theorem~\ref{thm:extremal.from.saturated}}        \label{sectexextr}
It follows from \cite[Theorem~4.1]{[AKP]} that the Laurent series fields
$(\F_p((t)),v_t)$ and the $p$-adic fields $(\Q_p,v_p)$ are extremal. The
former have equal characteristic, the latter mixed characteristic. All
of them have $\Z$ as their value group, which is a $\Z$-group.

In \cite{[K1]} a valued field extension $(L,v)$ of $(\F_p((t)),v_t)$ is
presented in which not all images of additive polynomials have the
optimal approximation property. In \cite{[AKP]} it is shown that $(L,v)$
is not extremal, although it is algebraically complete and its value
group $vL$ is a $\Z$-group (of rank 2). It is also shown that for
the nontrivial coarsening $w$ of $v$ corresponding to the convex
subgroup $(v_t t)\Z$ of $vL$, also $(L,w)$ is not extremal. As a coarsening of an
algebraically complete valuation, it is also algebraically complete. Its value
group $wL=vL/(v_t t)\Z$ is divisible and its residue field
$Lw=\F_p((t))$ is large, but not perfect. Note that $(L,v)$ and $(L,w)$
are of equal characteristic.

\pars
In order to prove the remaining existence statements of
Proposition~\ref{exextr} concerning non-extremal fields in mixed
characteristic, we consider compositions of valuations.
Unfortunately, contrary to the assertion that the proof of Lemma~5.2 of
\cite{[AKP]} is easy (and thus left to the reader), we are unable to
prove it in the cases that are not covered by
Proposition~\ref{compimpr}. (However, we also do not know of any
counterexample.) In fact, a slightly different version can easily be
proved: {\it If $(K,v)$ is ${\cal O}_v$-extremal, then
also $(K,w)$ is ${\cal O}_v$-extremal.} We do not know whether the
latter impies that $(K,w)$ is ${\cal O}_w$-extremal. Proposition~\ref{BaBb}
is of no help here because ${\cal O}_v$ is in general not a ball of the
form $B_\alpha(a)$ in $(K,w)$.

It appears, though, that we actually had in mind
the following result, which is indeed easy to prove:

\begin{lemma}                               \label{vexbarwex}
If $(K,v)$ is extremal and $v=w\circ\ovl{w}$, then
$(Kw,\ovl{w})$ is extremal.
\end{lemma}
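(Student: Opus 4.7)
The plan is to transfer extremality from $(K,v)$ to $(Kw,\ovl{w})$ by lifting polynomials and tuples. Given $\bar{f}\in Kw[X_1,\ldots,X_n]$, I would first lift it to a polynomial $f\in K[X_1,\ldots,X_n]$ whose coefficients lie in $\mathcal{O}_w$ and reduce modulo $\mathfrak{m}_w$ to the coefficients of $\bar{f}$. The key observation is that the residue map $\mathcal{O}_w\to Kw$ restricts to a surjection $\mathcal{O}_v\twoheadrightarrow \mathcal{O}_{\ovl{w}}$, so every tuple in $\mathcal{O}_{\ovl{w}}^n$ is the reduction of some tuple in $\mathcal{O}_v^n$, and for any such lift $\bar{f}(\bar{\vec{a}})=(f(\vec{a}))w$.

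Let $\Delta\subseteq vK$ be the convex subgroup corresponding to the coarsening $w$. Under the canonical identification $\ovl{w}(Kw)=\Delta$ one has: for $c\in\mathcal{O}_w^{\times}$, $vc\in\Delta$ with $\ovl{w}(cw)=vc$; for $c\in K$ with $wc>0$, $vc\notin\Delta$ and $vc>\delta$ for every $\delta\in\Delta$. Applying extremality of $(K,v)$ to $f$ produces
\[
\mu\;:=\;\max\{vf(\vec{a})\,:\,\vec{a}\in\mathcal{O}_v^n\}\;\in\; vK\cup\{\infty\}.
\]

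The argument then splits according to whether $\mu\in\Delta$. If $\mu\notin\Delta$ (including $\mu=\infty$), then a maximizer $\vec{a}$ satisfies $wf(\vec{a})>0$, hence $\bar{f}(\bar{\vec{a}})=0$ in $Kw$, so the set $\{\ovl{w}(\bar{f}(\bar{\vec{c}}))\,:\,\bar{\vec{c}}\in\mathcal{O}_{\ovl{w}}^n\}$ already contains $\infty$ and the maximum is trivial. If $\mu\in\Delta$, I would argue that no $\vec{b}\in\mathcal{O}_v^n$ can satisfy $f(\vec{b})\in\mathfrak{m}_w$, since otherwise $vf(\vec{b})$ would lie in $(vK\setminus\Delta)\cup\{\infty\}$ and thus strictly exceed $\mu$ by convexity of $\Delta$, contradicting maximality. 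Consequently $f(\vec{b})\in\mathcal{O}_w^{\times}$ for every $\vec{b}\in\mathcal{O}_v^n$, which translates to $\ovl{w}(\bar{f}(\bar{\vec{b}}))=vf(\vec{b})\leq\mu$, with equality at a maximizer; since every element of $\mathcal{O}_{\ovl{w}}^n$ is some $\bar{\vec{b}}$, this identifies $\mu$ as the maximum in $(Kw,\ovl{w})$.

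The only real subtlety is the dichotomy on whether $\mu\in\Delta$; it is exactly the convexity of $\Delta$ that makes the ``overflow'' case $\mu\notin\Delta$ correspond to $\bar{f}$ attaining the value zero, which is what lets the two possibilities cleanly exhaust all cases. Once the lifting and the identification $\ovl{w}(Kw)=\Delta$ are in place, extremality of $(K,v)$ does essentially all the work, and no further input on $w$ or $\ovl{w}$ is needed.
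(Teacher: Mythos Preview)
Your argument is correct and follows essentially the same approach as the paper: lift $\bar f$ to a polynomial $f$ with coefficients in $\mathcal{O}_w$, apply extremality of $(K,v)$ to obtain a maximizer in $\mathcal{O}_v^n$, and then push down using the identification of $\ovl{w}(Kw)$ with the convex subgroup $\Delta$ and the surjection $\mathcal{O}_v\twoheadrightarrow\mathcal{O}_{\ovl w}$. The only stylistic difference is that the paper avoids your explicit dichotomy on whether $\mu\in\Delta$ by a single contradiction step (if some $\bar{\vec a}\in\mathcal{O}_{\ovl w}^n$ gave a strictly larger $\ovl w$-value, any lift to $\mathcal{O}_v^n$ would contradict maximality of $vf$), which implicitly handles both of your cases at once.
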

\begin{proof}
Assume that $(K,v)$ is extremal with $v=w\circ\ovl{w}$; note that for
any $a,b\in {\cal O}_w\,$, $\ovl{w}(aw)>\ovl{w}(bw)$ implies $va>vb$.

Assume further that $g\in Kw[X_1,\ldots,X_n]$. Then choose $f\in {\cal O}_w[X_1,\ldots,X_n]$
such that $fw=g$. By assumption, there are $b_1,\ldots,b_n\in {\cal O}_v$ such that
\[
vf(b_1,\ldots,b_n)\>=\>\max\{vf(a_1,\ldots,a_n) \mid a_1,\ldots,a_n\in
{\cal O}_v\}\>.
\]
Since $b_1,\ldots,b_n\in {\cal O}_v\subseteq {\cal O}_w$ we have that
\[
f(b_1,\ldots,b_n)w\>=\>fw(b_1w,\ldots,b_nw)\>=\>g(b_1w,\ldots,b_nw)\>.
\]
We claim that
\[
\ovl{w}g(b_1w,\ldots,b_nw)\>=\>\max\{\ovl{w}g(\ovl{a}_1,\ldots,\ovl{a}_n)
\mid \ovl{a}_1,\ldots,\ovl{a}_n\in {\cal O}_{\ovl{w}}\}\>.
\]
Indeed, if there were $\ovl{a}_1,\ldots,\ovl{a}_n\in {\cal O}_{\ovl{w}}$
with $\ovl{w}g(\ovl{a}_1,\ldots,\ovl{a}_n)>\ovl{w}g(b_1w,\ldots,b_nw)$,
then for any choice of $a_1,\ldots,a_n\in {\cal O}_w$ with
$a_iw=\ovl{a}_i$ for $1\leq i\leq n$ we would obtain that $a_1,\ldots,a_n\in {\cal O}_v$
and $vf(a_1,\ldots,a_n)>vf(b_1,\ldots,b_n)$, a contradiction.
%
%${\cal O}_v \subseteq {\cal O}_w$ and $wa>wb>0\Rightarrow va>vb>0$.
%Suppose that $\{w(f(a_1,\dots,a_n))\mid a_1,\dots a_n \in {\cal O}_w\}$
%has no maximum. This does not change if we multiply $f$ by a nonzero
%constant. So we can assume that all coefficients lie in ${\cal O}_v\,$.
%and at least one of them is
\end{proof}

\pars
We use this lemma to prove the existence of the non-extremal fields in mixed
characteristic as claimed in Proposition~\ref{exextr}. We consider again
the two non-extremal fields $(L,v)$ and $(L,w)$ mentioned above. By
Theorem~2.14 of \cite{[K2]} there is an extension $(K_0,v_0)$ of
$(\Q,v_p)$ with divisible value group and $L$ as its residue field. We
replace $(K_0,v_0)$ by a maximal immediate extension $(M,v_0)$. Then
$(M,v_0)$ is algebraically complete, and so are $(M,v_0\circ v)$ and
$(M,v_0\circ w)$. The value group of $(M,v_0\circ v)$ is a $\Z$-group,
and $(M,v_0\circ w)$ has divisible value group and nonperfect large
residue field. But by Lemma~\ref{vexbarwex}, both fields are non-extremal.

\parm
Finally, we have to prove the existence of extremal fields as stated in parts c) and d) of
Proposition~\ref{exextr}. We will employ Theorem~\ref{thm:extremal.from.saturated} which we will prove now. We note
that by Theorem~\ref{gen}, the residue field of an extremal field with divisible value group must be large. Also,
every non-trivially valued extremal field is henselian, which implies that it is itself a large field. Therefore,
it remains to prove the following assertion:
\sn
{\it Let $(K,v)$ be any $\aleph_{1}$-saturated valued field. Assume that $\Gamma$ and $\Delta$ are convex subgroups
of $vK$ such that $\Delta\subsetuneq\Gamma$ and $\Gamma/\Delta$ is archimedean. Let $u$ (respectively $w$) be the
coarsening of $v$ corresponding
to $\Delta$ (resp. $\Gamma$). Denote by $\bar{u}$ the valuation induced on $Kw$ by $u$. Then $(Kw,\bar{u})$ is
maximal and extremal, and its value group is isomorphic either to $\Z$ or to $\R$.}

\begin{proof}
Denote by $\mathcal{O}_{u}$ (resp.\ $\mathcal{O}_{w}$) the valuation ring corresponding to $u$ (resp.\ $w$). We note that the value group of $u$ is $vK/\Delta$, the value group of $w$ is $vK/\Gamma$, and we have that
\[
\mathcal{O}_v\>\subset\> \mathcal{O}_u\>\subset\> \mathcal{O}_w\>.
\]
We show first that $(Kw,\bar{u})$ is maximal. From \cite[Theorem 4]{[Ka]} we know that a valued field is maximal
if and only if every pseudo Cauchy sequence has a limit in the field. We refer the reader to \cite{[Ka]}
for an excellent introduction to the theory of pseudo Cauchy sequences (which Kaplansky calls ``pseudo-convergent
sets''). If $(a_i)_{i<\lambda}$ is any pseudo Cauchy sequence in $(Kw,\bar{u})$, then the sequence
$\bar{u}(a_{i+1}-a_i)$  in $\bar{u}(Kw)=\Gamma/\Delta$ is strictly increasing. But the cofinality of any
strictly increasing sequence in $\R$ (and hence also in any archimedean ordered abelian group) is at most $\omega$.
Therefore, it suffices to show that every pseudo Cauchy sequence $(a_i)_{i<\omega}$ in $(Kw,\bar{u})$ has a
limit. By definition, $a\in Kw$ is a limit of this sequence if and only if $\bar{u}(a-a_i)=\bar{u}(a_{i+1}-a_i)$
for all $i<\omega$.

Write $a_i=b_iw$ with $b_i\in \mathcal{O}_{w}\,$, $i<\omega$. Then the sequence $(u(b_{i+1}-b_i))_{i<\omega}$ is
strictly increasing in $vK/\Delta$. This implies that the sequence $(v(b_{i+1}-b_i))_{i<\omega}$ is
strictly increasing in $vK$. We consider the following (partial) type in countably many parameters:
\[
\{v(x-b_i)=v(b_{i+1}-b_i)\mid i<\omega\}\>.
\]
It is finitely realizable in $(K,v)$ since for $x=b_{i+1}$ we obtain that $v(x-b_j)=v(b_{j+1}-b_j)$ holds for $0\leq j\leq i$. By saturation, there is some $b\in K$ which realizes this type. Now $v(b-b_i)=v(b_{i+1}-b_i)$ implies that
\begin{eqnarray*}
w(b-b_i)&=&v(b-b_i)+\Gamma=v(b_{i+1}-b_i)+\Gamma=w(b_{i+1}-b_i)\>,\\
u(b-b_i)&=&v(b-b_i)+\Delta=v(b_{i+1}-b_i)+\Delta=u(b_{i+1}-b_i)\>.
\end{eqnarray*}
The former implies that $b\in \mathcal{O}_{w}$ as also all $b_i$ are in $\mathcal{O}_{w}$; so we can set $a:=bw$. The latter then implies that \[
\bar{u}(a-a_i)=\bar{u}(bw-b_iw)=\bar{u}((b-b_i)w)=u(b-b_i)=u(b_{i+1}-b_i)=\bar{u}(a_{i+1}-a_i)\>.
\]
This proves that $a\in Kw$ is a limit of the pseudo Cauchy sequence $(a_i)_{i<\omega}$ and shows that
$(Kw,\bar{u})$ is maximal.

\parm
Now we distinguish two cases.\sn
\underline{Case 1}: $\bar{u}(Kw)$ is isomorphic to $\Z$. In this case, it follows from the maximality that $(Kw,\bar{u})$ is algebraically complete and hence extremal \cite[by Theorem~4.1]{[AKP]}.

\mn
\underline{Case 2}: $\bar{u}(Kw)=\Gamma/\Delta$ is densely ordered. Note that since the archimedean ordered group $\Gamma/\Delta$ is embeddable in $\R$, any subset of it has coinitiality and cofinality no greater than $\aleph_{0}$.

We show that $(Kw,\bar{u})$ is extremal. The value group $\bar{u}(Kw)$ is $\Gamma/\Delta$ and $\mathcal{O}_{\bar{u}}$ is the image of $\mathcal{O}_{u}$ under the residue map $x\mapsto xw$ of $w$. For a tuple $\mathbf{a}=(a_1,...,a_m)$ from $\mathcal{O}_{w}$, we denote by $\mathbf{a}w:=(a_1w,...,a_m w)$ the corresponding tuple of residues.

Let $\bar{f}\in Kw[\mathbf{x}]$ be a polynomial in the variables $\mathbf{x}=(x_1,...,x_m)$ and let $f\in\mathcal{O}_{w}[\mathbf{x}]$ denote any lift of $\bar{f}$ so that $fw=\bar{f}$. We must show that the set of $\bar{u}$-values of the image of $\bar{f}$, i.e.,
\begin{align*}
X:=&\left\{\bar{u}(\bar{f}(\mathbf{b}))\in\Gamma/\Delta\cup\{\infty\}\;\left|\;\mathbf{b}\in\mathcal{O}_{\bar{u}}\right.\right\}\\
=&\left\{\bar{u}(\bar{f}(\mathbf{a}w))\in\Gamma/\Delta\cup\{\infty\}\;\left|\;\mathbf{a}\in\mathcal{O}_{u}\right.\right\},
\end{align*}
has a maximum. As noted above, the cofinality of $X$ is no greater than $\aleph_{0}$. Thus there is a sequence $(\mathbf{a}_{n})_{n<\omega}$ of $m$-tuples from $\mathcal{O}_{u}$ such that the sequence
$$(\bar{u}(\bar{f}(\mathbf{a}_{n}w))_{n<\omega}$$
is increasing and cofinal in $X$. For each $n<\omega$ we set $\alpha_{n}:=v(f(\mathbf{a}_{n}))$, and note that either $\bar{f}(\mathbf{a}_{n}w)=0$ (in which case $\bar{u}(\bar{f}(\mathbf{a}_{n}w))=\infty$ must be the maximum of $X$) or
$$\alpha_{n}+\Delta=u(f(\mathbf{a}_{n}))=\bar{u}(\bar{f}(\mathbf{a}_{n}w)).$$

Next we set $Y:=\{\gamma+\Delta\in\Gamma/\Delta\;|\;\gamma+\Delta<\Delta\}$. Then $Y$ is equal to the image under $u$ of the elements of $\mathcal{O}_{w}\setminus\mathcal{O}_{u}$ (and also equal to the image under $\bar{u}$ of $Kw\setminus\mathcal{O}_{\bar{u}}$). By assumption, $\Gamma/\Delta$ is densely ordered; thus $Y$ has no maximum. Also the cofinality of $Y$ can be no greater than $\aleph_{0}$. Thus there is a sequence $(\beta_{n})_{n<\omega}$ in $\Gamma$ such that $(\beta_{n}+\Delta)_{n<\omega}$ is a strictly increasing and cofinal sequence in $Y$.

Finally we consider the following (partial) $\mathbf{x}$-type in countably many parameters:
\[
p(\mathbf{x}):=\left\{\alpha_{n}\leq v(f(\mathbf{x}))\;\left|\;n<\omega\right.\right\}\cup\left\{\beta_{n}\leq v(x_{i})\;\left|\;n<\omega, 1\leq i\leq m\right.\right\}\>.
\]
This is finitely realised in $(K,v)$. By saturation, it is realized by some $m$-tuple $\mathbf{c}=(c_1,...,c_m) \in K^m$.

For $1\leq i\leq m$ we examine the second set of formulas in $p(\mathbf{x})$ to find that $\beta_{n}\leq v(c_{i})$, for each $n<\omega$. Thus $\beta_{n}+\Delta\leq v(c_{i})+\Delta=u(c_{i})$, again for each $n<\omega$. By the cofinality of the sequence $(\beta_{n}+\Delta)_{n<\omega}$ in $Y$ we have that $c_{i}\in\mathcal{O}_{u}$.

Finally, by examining the first set of formulas in $p(\mathbf{x})$, we see that $\alpha_{n}\leq v(f(\mathbf{c}))$, for all $n<\omega$. Then either $\bar{u}(\bar{f}(\mathbf{c}w))=\infty$ (in which case $\infty$ is the maximum of $X$) or we have that
\[
\alpha_{n}+\Delta\>\leq\> v(f(\mathbf{c}))+\Delta\>=\>u(f(\mathbf{c}))\>=\>\bar{u}(\bar{f}(\mathbf{c}w)),
\]
for all $n<\omega$. Since $(\alpha_{n}+\Delta)$ is cofinal in $X$, $\bar{u}(\bar{f}(\mathbf{c}w))$ is the maximum of $X$. This shows that $(Kw,\bar{u})$ is extremal, as required.

\parm
For the conclusion of the proof, we show that the value group of $(Kw,\bar{u})$ is cut complete, which shows that it is isomorphic to $\R$. Take a Dedekind cut $(D,E)$ in $\bar{u}(Kw)$, that is, $D$ is a nonempty initial segment of $\bar{u}(Kw)$ and $E$ is a nonempty final segment of $\bar{u}(Kw)$ such that $D\cup E=\bar{u}(Kw)$. As noted before,
the cofinality of $D$ and the coinitiality of $E$ are no greater than $\aleph_{0}$. Thus there are sequences $(\beta_{n})_{n<\omega}$ and $(\gamma_{n})_{n<\omega}$ in $\Gamma$ such that $(\beta_{n}+\Delta)_{n<\omega}$ is an increasing and cofinal sequence in $D$ and $(\gamma_{n}+\Delta)_{n<\omega}$ is a decreasing and coinitial sequence in $E$.
We consider the following (partial) type in countably many parameters:
\[
\left\{\beta_{n}\leq vx\mid n<\omega\right\}\cup\left\{\gamma_{n}\geq vx\mid n<\omega\right\}\>.
\]
This is finitely realized in $(K,v)$. Hence by saturation, it is realized by some $d\in K$. Then $\beta_{n}\leq vd\leq\gamma_n$ and therefore $\beta_{n}+\Delta\leq ud\leq\gamma_n+\Delta\,$, for each $n<\omega$. It follows that $ud$ lies in the convex hull of $\Gamma/\Delta$ in $vK/\Delta$, which shows that $wd=0$. So $dw\in Kw$, and we obtain that
\[
D\leq \bar{u}(dw)=ud \leq E\>,
\]
which proves that the cut $(D,E)$ is realized in $\bar{u}(Kw)$, showing that this group is cut complete.
\end{proof}

\pars
We may choose $(K, v)$ so that $\Gamma/\Delta$ is densely ordered, for any $\Delta\subset\Gamma\subset vK$. Indeed, if we take any integer $n\geq 2$ and $(K,v)$ such that $vK$ is $n$-divisible, then also $\Gamma/\Delta$ will be $n$-divisible and hence densely ordered. If on the other hand, the residue field $Kv$ is imperfect and $w\subset u\subset v$
are as in the theorem, then also the residue field of $(Kw,\bar{u})$, which is equal to $Ku$, is imperfect. Taking $(K,v)$ to be an $\aleph_1$-saturated valued field of equal characteristic $p$ with imperfect large residue field and $n$-divisible value group, and choosing $\Gamma$ and $\Delta$ according to Remark~\ref{rem:gamma.delta}, we obtain from Theorem~\ref{thm:extremal.from.saturated}:

\begin{corollary}\label{cor:positive.char}
Let $p$ be a prime. There exist extremal fields of equal characteristic $p$ with value group isomorphic to $\R$ and imperfect residue field.
\end{corollary}

To give an example of an extremal field obtained by this corollary, we begin by taking any $\aleph_1$-saturated elementary extension $(K,v)$ of the Puiseux series field $\bigcup_{n\in\N} \Fp(x)((t^{1/n}))$ over $\Fp(x)$, where $x$ is transcendental over $\Fp\,$. As the residue field $Kv$ is an elementary extension of the lower residue field, it is also imperfect. As the value group $vK$ is an elementary extension of the lower value group, it is also divisible.

\pars
On the other hand, we can extend the $p$-adic valuation from $\Q$ to a valuation $v$ on $\Q(x)$ such that $xv$ is transcendental over $\Fp$; then the residue field of $(\Q(x),v)$ will be the imperfect field $\Fp(xv)$. By adjoining $n$-th roots repeatedly, we can pass, without changing the residue field, to an algebraic extension $(k,v)$ of $(\Q(x),v)$ with $n$-divisible value group. Now we can take any $\aleph_1$-saturated elementary extension $(K,v)$ of $(k,v)$. Then $Kv$ will again be imperfect, $vK$ will be $n$-divisible, and $(K,v)$ will have mixed characteristic $(0,p)$.

In order to achieve that the valued field $(Kw,\bar{u})$ in Theorem~\ref{thm:extremal.from.saturated} also has mixed characteristic, we choose $\Gamma$ and $\Delta$ as follows. We take $\Delta$ to be the largest convex subgroup of $vK$ not containing $vp$ and let $\Gamma$ be the smallest convex subgroup of $vK$ containing $vp$. Then $\Delta$ is the largest proper convex subgroup of $\Gamma$, and therefore $\Gamma/\Delta$ is archimedean. It follows that $pw\ne 0$ since $vp\notin\Delta$, but $(pw)\bar{u}=pu=0$ since $vp\in\Gamma$. This shows that $\chara Kw=0$ and $\chara (Kw)\bar{u}=p$. We thus obtain:

\begin{corollary}\label{cor:mixed.char}
Let $p$ be a prime. There exist extremal fields of mixed characteristic $(0,p)$ with value group isomorphic to $\R$ and imperfect residue field.
\end{corollary}

Corollaries~\ref{cor:positive.char} and~\ref{cor:mixed.char} together complete the proof of Proposition~\ref{exextr}.

\pars
By taking $(K,v)$ as in one of these corollaries and $(L,v)$ to be a countable model of $\Th(K,v)$, we obtain:

\begin{corollary}
Let $p$ be a prime. There exist countable extremal fields of equal characteristic $p$ with divisible value group not isomorphic to $\R$ and imperfect residue field. Likewise, there exist countable extremal fields of mixed characteristic $(0,p)$ with divisible value group not isomorphic to $\R$ and imperfect residue field.
\end{corollary}

By choosing models of arbitrary cardinality, one can obtain divisible value groups of arbitrarily large cardinality. But we do not know which divisible ordered abelian groups (and not even which cardinalities) can be thus obtained, as we are lacking an AKE-principle.

\pars
We will now give the
\n
{\bf Proof of Corollary~\ref{non-comp}:} \
We take $(K,v)$ to be an $\aleph_{1}$-saturated elementary extension of an arbitrary non-large valued field whose
value group is divisible by some $n\geq 2$, and apply Theorem~\ref{thm:extremal.from.saturated}. Since also $vK$ is
divisible by $n$, for all $u$ and $w$ as in the theorem the value group of $(Kw,\bar{u})$ is divisible. Hence if $v=w_1\circ w_2\circ w_3$ with $w_2$ of rank 1, then by setting $w=w_1$ and $u=w_1\circ w_2$ it follows from the theorem that $(Kw_1,w_2)$ is extremal with nontrivial divisible value group, hence\n
a) \ $w_2$ is henselian,\n
b) \ $Kw_1$ is large,\n
c) \ $(Kw_1)w_2$ is large.
\qed

\parb
For the conclusion of this paper, let us discuss how the property of
extremality behaves in a valued field extension $(L|K,v)$ where $(K,v)$
is existentially closed in $(L,v)$. In this case, it is
known that $L|K$ and $Lv|Kv$ are regular extensions and that $vL/vK$ is
torsion free. (An extension $L|K$ of fields is called \bfind{regular} if
it is separable and $K$ is relatively algebraically closed in $L$.)

\begin{proposition}
Take a valued field extension $(L|K,v)$ such that $(K,v)$ is
existentially closed in $(L,v)$, a subset $S_K$ of $K$ that is existentially definable
with parameters in $K$, and a polynomial $f$ in $n$ variables over $K$.
Denote by $S_L$ the subset of $L$ defined by the existential formula that
defines $S_K$ in $K$. Then the following assertions hold.
\n
a) \ If $(K,v)$ is $S_K$-extremal w.r.t.\ $f$, then $(L,v)$ is
$S_L$-extremal w.r.t.\ $f$ and $\max vf(S_L^n)=\max vf(S_K^n)$. In
particular, if $(K,v)$ is extremal, then $(L,v)$ is extremal w.r.t.\ all
polynomials with coefficients in $K$.
\n
b) \ Assume in addition that $vL=vK$. If $(L,v)$ is $S_L$-extremal
w.r.t.\ $f$, then $(K,v)$ is $S_K$-extremal w.r.t.\ $f$ and $\max
vf(S_L^n)=\max vf(S_K^n)$. In particular, if $(L,v)$ is extremal, then
so is
$(K,v)$.
\end{proposition}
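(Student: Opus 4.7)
The plan is to reduce everything to transferring existential sentences between $K$ and $L$ via the definition of $(K,v)\prec_\exists (L,v)$. Throughout I assume the defining formula $\phi$ for $S$ is existential (as it is for $\mathcal{O}$ or any ball $B_\alpha(a)$), so that $S_K\subseteq S_L$ and finite conjunctions $\phi(x_1)\wedge\cdots\wedge\phi(x_n)$ remain existential. For (a), I would take $\gamma:=\max vf(S_K^n)=vf(b_1,\dots,b_n)$ with $b_i\in S_K$; since the same tuple lies in $S_L^n$, the value $\gamma$ is already attained on $S_L^n$. If some $(a_1,\dots,a_n)\in S_L^n$ had $vf(a)>\gamma=vc$ with $c:=f(b_1,\dots,b_n)\in K^\ast$, then the existential sentence $\exists x_1\cdots\exists x_n\,[\phi(x_1)\wedge\cdots\wedge\phi(x_n)\wedge vf(x_1,\dots,x_n)>vc]$ would be witnessed in $L$, hence by $\prec_\exists$ also in $K$, contradicting the choice of $\gamma$.

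Part (b) is symmetric but genuinely requires $vL=vK$: this is what lets me name $\delta:=\max vf(S_L^n)$ by a parameter $c\in K^\ast$ with $vc=\delta$ (the case $\delta=\infty$ is handled separately by the existential sentence $\exists x_1\cdots\exists x_n\,[\phi(x_1)\wedge\cdots\wedge\phi(x_n)\wedge f(x_1,\dots,x_n)=0]$). Then $\exists x_1\cdots\exists x_n\,[\phi(x_1)\wedge\cdots\wedge\phi(x_n)\wedge vf(x_1,\dots,x_n)\ge vc]$ holds in $L$, hence in $K$ by $\prec_\exists$, producing $b\in S_K^n$ with $vf(b)\ge\delta$. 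Since $b\in S_L^n$ and $\delta$ is the $L$-maximum, $vf(b)=\delta$; and since $S_K^n\subseteq S_L^n$, every $a\in S_K^n$ satisfies $vf(a)\le\delta=vf(b)$. Thus $b$ realizes $\max vf(S_K^n)=\delta$. Both ``in particular'' clauses fall out on taking $\phi(x)\equiv(x\in\mathcal{O})$, so that $S_K=\mathcal{O}_K$ and $S_L=\mathcal{O}_L$.

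The main obstacle is checking that the relevant sentences really are existential in whatever language underlies $\prec_\exists$. The inequality $vx>vy$ is atomic in a multi-sorted language with the value group as a separate sort, while in a one-sorted language with predicates for $\mathcal{O}$ and $\mathfrak{m}$ it can be rewritten as $\exists z\,[yz=x\wedge z\in\mathfrak{m}]$. Once this point is granted, the rest of the argument is mechanical.
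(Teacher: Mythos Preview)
Your argument is correct and follows essentially the same route as the paper: for (a) you suppose a larger value is attained on $S_L^n$ and pull that witness back to $K$ by existential closedness, and for (b) you use $vL=vK$ to name the $L$-maximum by a $K$-parameter and pull a witness back. The paper argues identically, only writing ``$vf(x)=vc$'' in (b) where you write ``$vf(x)\ge vc$''; both versions work.

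One point worth noting is that you make explicit an assumption the paper leaves implicit: that the defining formula $\phi$ for $S$ is existential. The paper asserts without comment that ``the assertion that there exists $b\in S_L^n$ with $vf(b)>vf(a)$ is an elementary existential sentence'' and that $S_K\subseteq S_L$, both of which need $\phi$ to be existential (or need something stronger than $\prec_\exists$). Your restriction to existential $\phi$ is therefore the right move and covers the intended applications ($S=\mathcal{O}$ or a ball). Your separate treatment of the case $\delta=\infty$ in (b) is also a detail the paper omits.
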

\begin{proof}
a): \ Assume that $a\in S_K^n$ such that $vf(a)=\max vf(S_K^n)$. Then
the assertion that there exists an element $b$ in $S_L^n$ such that
$vf(b)> vf(a)$ is an elementary existential sentence with parameters in
$K$. Hence if it held in $L$, then there would be an element $b'$ in
$S_K^n$ such that $vf(b')>vf(a)$, which is a contradiction to the choice
of $a$. It follows that $\max vf(S_L^n)\leq\max vf(S_K^n)$. Since
$S_K\subseteq S_L\,$, we obtain that $\max vf(S_L^n)=\max vf(S_K^n)$.
\sn
b): \ Take $b\in S_L^n$ such that $vf(b)=\max vf(S_L^n)$. Since
$vL=vK$ by assumption, there is $c\in K$ such that $vc=vf(b)$. Now
the assertion that there exists an element $b$ in $S_L^n$ such that
$vf(b)=vc$ is an elementary existential sentence with parameters in $K$.
Hence there is $a\in S_K^n$ such that $vf(a)=vc=\max vf(S_L^n)$. Since
$vf(a)\in vf(S_K^n)\subseteq vf(S_L^n)$, we obtain that $vf(a)=
\max vf(S_K^n)$.
\end{proof}

%\adresse
\end{document}